\theoremstyle{plain}
\newtheorem{thm}{Theorem}
\newtheorem{lem}{Lemma}
\newtheorem{prp}{Proposition}
\theoremstyle{definition}
\newtheorem{dfn}{Definition}
\newcommand{\mrank}{\mathrm{rank}}
\journal{Journal of \LaTeX\ Templates}
\newcommand{\mcM}{\mathcal{M}}
\newcommand{\mcR}{\mathcal{R}}
\newcommand{\mbk}{\mathbf{k}}
\newcommand{\mbq}{\mathbf{q}}
\newcommand{\mbn}{\mathbf{n}}
\newcommand{\mbl}{\mathbf{l}}
\newcommand{\mbZ}{\mathbb{Z}}
\newcommand{\mbR}{\mathbb{R}}
\begin{document}

\begin{frontmatter}

\title{Locally optimal 2-periodic sphere packings}

\author{Alexei Andreanov\corref{mycorrespondingauthor}}
\cortext[mycorrespondingauthor]{Corresponding author}
\address{Center for Theoretical Physics of Complex Systems, Institute for Basic Science (IBS), Daejeon 34126, Republic of Korea}
\ead{alexei348@ibs.re.kr}

\author{Yoav Kallus}
\address{Santa Fe Institute, 1399 Hyde Park Road, Santa Fe, NM 87501, USA}
\ead{Yoav.Kallus@gmail.com}

\begin{abstract}
    The sphere packing problem is an old puzzle. We consider packings with $m$ spheres in the unit cell ($m$-periodic packings). For the case $m=1$ (lattice packings), Voronoi proved there are finitely many inequivalent local optima and presented an algorithm to enumerate them, and this computation has been implemented in up to $d=8$ dimensions. We generalize Voronoi's method to $m>1$ and present a procedure to enumerate all locally optimal 2-periodic sphere packings in any dimension, provided there are finitely many. We implement this computation in $d=3,4$, and $5$ and show that no 2-periodic packing surpasses the density of the optimal lattices in these dimensions. A partial enumeration is performed in $d=6$.
\end{abstract}

\begin{keyword}
    sphere packing\sep periodic point set\sep quadratic form\sep Ryshkov polyhedron
    \MSC[2010] 	52C17\sep 11H55\sep 52B55\sep 90C26

\end{keyword}

\end{frontmatter}


\section{Introduction}
\label{sec:intro}

The sphere packing problem asks for the highest possible density achieved by an arrangement of nonoverlapping spheres in a Euclidean space of $d$ dimensions. The exact solutions are known in $d=2$~\cite{toth1940uber}, $d=3$~\cite{hales2005proof}, $d=8$~\cite{viazovska2016sphere} and $d=24$~\cite{cohn2016sphere}. This natural geometric problem is useful as a model of material systems and their phase transitions, and, even in nonphysical dimensions,it is related to fundamental questions about crystallization and the glass transition~\cite{charbonneau2014fractal}.The sphere packing problem also arises in the problem of designing an optimal error-correcting code for a continuous noisy communication channel~\cite{conway1999sphere}. Perhaps even more than the obvious applications of the sphere packing problem, contributing to its importance is the unexpected wealth of remarkable structures that appear as possible solutions and merit study in their own right~\cite{thompson1983error,cohn2017conceptual}.

There does not appear to be a systematic solution or construction that achieves the optimal packing in every dimension, and every dimension seems to have its own quirks, unearthing new surprises~\cite{conway1999sphere}. In some dimensions, such as $d=8$ and $d=24$, the solution is unique and given by exceptionally symmetric lattices. In others, such as $d=3$, there is a lattice that achieves the highest density, but this density can also be achieved by other periodic packings with larger fundamental unit cells and even by aperiodic packings. For $d=10$ it seems that the highest density, achieved by a periodic arrangement with $40$ spheres per unit cell, cannot be achieved by a lattice. In any given dimension, the densest packing known is periodic, but it is not known whether there is some dimension where the densest packing is not periodic. In fact, frighteningly little is known in general as we go up in dimensions. It is possible that in all dimensions the densest packing is achieved by a periodic packing with a universally bounded number of spheres in the unit cell, that in all dimensions it is achieved by a periodic packing, but only with unboundedly many spheres in the unit cell, or that in some dimension it is not achieved by a periodic packing at all. In any case, periodic packings with arbitrarily many spheres in the unit cell can approximate the optimal density in any dimension to arbitrary precision.

Therefore, it is reasonable to ask for the optimal density achieved by a periodic packing in $d$ dimensions with $m$ spheres per unit cell, which we denote $\phi_{d,m}$. In the limit $m\to\infty$, this density approaches (and possibly equals for some finite $m$) the optimal packing density $\phi_d$. Apart from the dimensions where $\phi_d$ is known, there are no cases with $m>1$ where $\phi_{d,m}$ is known. In this paper, we describe a method for calculating $\phi_{d,2}$, and use it to obtain $\phi_{4,2}=\phi_{4,1}$ and $\phi_{5,2}=\phi_{5,1}$. Our method enumerates all inequivalent local optima, and therefore does not terminate if there are infinitely many. However, we conjecture that there are finitely many inequivalent local optima in each dimension, and therefore that our method reduces the problem of calculating $\phi_{d,2}$ to a finite calculation. Our method could also be generalized to larger $m$, but becomes more complicated.

For the case $m=1$, corresponding to lattices, Voronoi gave an algorithm to enumerate all the locally optimal solutions to the problem. In geometrical terms, Voronoi's algorithm uses the fact that the space of lattices up to isometry can be parameterized (redundantly) by positive definite quadratic forms. The subset in the linear space of quadratic forms that corresponds to lattices with no two points less than a certain distance apart is a locally finite polyhedron (a closed convex set, every intersection of which with a compact polytope is a compact polytope; see \ref{sec:lfp}). The additional fact that the density of lattice points is a quasiconvex function implies that local maxima can only occur at the vertices of this polyhedron. Because there are only finitely many vertices that correspond to distinct lattices, the local optima of the lattice sphere packing problem can be fully enumerated. This interpretation of Voronoi's algorithm was suggested by Ryshkov~\cite{ryshkov1970polyhedron} and so the polyhedron is known as the Ryshkov polyhedron. The lattices corresponding to vertices of the polyhedron are known as \textit{perfect lattices}, but not every perfect lattice is locally optimal, or an \textit{extreme lattice}. This algorithm has been implemented and executed to determine $\phi_{d,1}$ for $d\le 8$, as well as to enumerate all perfect lattices in these dimensions~\cite{schurmann2009computational}. However, the exploding number of perfect lattices as $d$ increases make the execution of this algorithm currently unfeasible even for $d=9$~\cite{andreanov2012random}.

Sch\"{u}rmann transported some of the concepts from Voronoi's theory to the case of periodic packings and was able to show that any extreme lattice, when viewed as a periodic packing, is still locally  optimal~\cite{schurmann2010perfect,schurmann2013strict}. However the nonlinearities that arise in the case $m>1$ have made it difficult to use this version of the theory to provide an analog of Voronoi's algorithm. In this paper we transport the concepts in a slightly different way than Sch\"{u}rmann, permitting us to reconstruct many of the elements of Voronoi's algorithm in the case of periodic packings with fixed $m$. These elements behave nicely enough for $m=2$ to allow us to provide an algorithm for the enumeration of all locally optimal packings, and we execute this algorithm for $d\le5$.

\section{Theoretical preparation}
\label{sec:tp}

\subsection{Periodic point sets}

For a set of points, $\Xi\subset\mbR^d$, the \textit{packing radius} is the largest radius, such that if balls were centered at each of the points, they would not overlap. Namely, the packing radius is half the infimum distance between any two points, $\rho(\Xi) = \tfrac12\mathrm{inf}_{x,y\in\Xi}\|x-y\|$. A set of points is \textit{periodic} if there are $d$ linearly independent vectors $\mathbf{a}_1,\ldots,\mathbf{a}_d$, such that the set is invariant under translation by any of these vectors. The group generated by $\mathbf{a}_1,\ldots,\mathbf{a}_d$ is a lattice, $\Lambda =\{\sum_{i=1}^{d} n_i \mathbf{a}_i : n_1,\ldots,n_d\in \mbZ\}$. If all translations that fix the set are in $\Lambda$, we say that $\Lambda$ is a primitive lattice for the periodic set. The primitive lattice is unique. A set that is periodic under translation by a lattice $\Lambda$ and has $m$ orbits under translations by $\Lambda$ called $m$-periodic. We will only deal with sets of positive packing radius, and therefore $m$ is necessarily finite.

The number density of a periodic set $\Xi$ is given by
\begin{equation}
    \delta(\Xi) = \frac{m}{\det \Lambda}\text,
\end{equation}
where $\det \Lambda$ is the volume of a parallelotope generated by the generators of $\Lambda$, $\{\sum_{i=1}^{d} x_i \mathbf{a}_i : 0\le x_1,\ldots,x_d\le 1\}$. This parallelotope and its translates under $\Lambda$ tile $\mbR^d$, and its volume is the same as the volume of any other \textit{fundamental cell} of $\Lambda$, that is, a polytope whose $\Lambda$-translates tile $\mbR^d$. Therefore, this is a property of $\Lambda$, and not of the particular basis. Also, since $\Lambda$ itself is not uniquely determined from $\Xi$, but can be any sublattice of the primitive lattice of $\Xi$, it is important to note that the formula for $\delta(\Xi)$ above is independent of the choice of $\Lambda$. The largest density that can be achieved by a packing of equal-sized balls centered at the points of $\Xi$ is $\phi(\Xi) = V_d \rho(\Xi)^d \delta(\Xi)$, where $V_d$ is the volume of a unit ball in $\mbR^d$. In any family of point sets closed under homothety, such as periodic sets or $m$-periodic sets, maximizing the packing density $\phi$ is equivalent to maximizing the number density under the constraint $\rho(\Xi)\ge\rho_0$ for some fixed $\rho_0$.

\subsection{Voronoi's algorithm}

A $1$-periodic packing is a translate of a lattice. The packing radius of a lattice is given by
\begin{equation}
    \rho(\Lambda)=\tfrac12\min_{\mbl,\mbl'\in\Lambda}\|\mbl-\mbl'\|=\tfrac12\min_{\mbl\in\Lambda}\|\mbl\|\text.
\end{equation}
The packing radius of a lattice and the volume of its fundamental cell are both invariant under rotations of the lattice. Therefore, to find the densest lattice packing, we only need to consider lattices up to rotation. Consider a lattice $\Lambda=A\mbZ^d$ generated by $\mathbf{a}_1,\ldots,\mathbf{a}_d$. The quadratic form $Q\colon \mbZ^d\to\mbR$, $Q(n_1,\ldots,n_d) = \|n_1 \mathbf{a}_1+\ldots+n_d \mathbf{a}_d\|^2$ determines $\Lambda$ up to rotations. However, since a lattice can have different generating vectors, different quadratic forms can correspond to the same lattice. $Q$ and $Q'$ correspond to the same lattice if and only if $Q=Q\circ U$, where $U\in GL_d(\mbZ)$. This is precisely the group of linear maps that map $\mbZ^d$ to itself. The packing radius of a lattice $\Lambda$ corresponding to the quadratic form $Q$ is $\rho(\lambda) = \tfrac 12 (\min Q)^{1/2}$, where $\min Q$ is the minimum of $Q$ over nonzero vectors. The determinant of $\Lambda$ is given by $(\det Q)^{1/2}$.

The linear space of quadratic forms can be identified with the linear space of symmetric matrices $\mathcal{S}^d$ using the standard basis of $\mbZ^d$, so that $Q(\mbn) = \mbn^T Q\mbn$. And the natural inner product in this space is given by $\langle Q, Q'\rangle = \operatorname{tr} QQ'$. The condition $\min Q \ge \lambda$ can be written as the intersection of infinitely many linear inequalities:
\begin{equation}
    \label{eq:ineqlatt}
    Q(\mbn) = \langle \mbn\mbn^T, Q \rangle \ge \lambda \quad \text{ for all } \mbn\in\mbZ^d
\end{equation}
It can be easily checked that for $\lambda>0$, Eq.~\eqref{eq:ineqlatt} implies that $Q$ is a positive definite matrix,
and a lattice corresponding to it can be recovered, e.g.\, by Cholesky decomposition.

The set $\{Q: \min Q \ge \lambda\}$ is a locally finite polyhedron, that is, a closed convex set such that every intersection of it with a compact polytope is a compact polytope. Loosely speaking, though it is not a polyhedron (the intersection of finitely many halfspaces), it behaves locally like one. We were not able to find a satisfactory reference on locally finite polyhedra, and so we prove some basic results about them in \ref{sec:lfp}. In the remainder of the text, we use the less cumbersome \textit{polyhedron} to refer to any locally finite polyhedron. Ryshkov reinterpreted Voronoi's algorithm in terms of the polyhedron $\{Q: \min Q \ge \lambda\}$, which is therefore known as the Ryshkov polyhedron~\cite{ryshkov1970polyhedron}. Though the Ryshkov polyhedron has infinitely many vertices, there are only finitely many orbits under the action of $GL_d(\mbZ)$. Therefore, if we start at some vertex, enumerate the vertices with which it shares an edge, and repeat the process for any enumerated vertex not of the same orbit as a previous vertex, we eventually have representatives of all the orbits of vertices in a connected component of the 1-skeleton of the Ryshkov polyhedron. Since the 1-skeleton of a locally finite polyhedron is connected, these are all the orbits of vertices.

We wish to find the minimum of $\det Q$ in this polyhedron. Because the determinant is a quasiconcave function, its local minima in the polyhedron can occur only on the vertices, and to find the global minimum it is enough to compare its value at all the vertices.

\subsection{The Ryshkov-like Polyhedron}

To parameterize $m$-periodic point sets, we can use a similar scheme. An $m$-periodic point set is the union of $m$ translates of a lattice, $\Xi = \bigcup_{i=0}^{m-1} (\Lambda + \mathbf{b}_i)$, where $\Lambda = A\mbZ^d$. Without loss of generality, we can take $\mathbf{b}_0 = 0$. Now, consider the set $\mathcal M\subseteq\mbZ^{d+m-1}$ given by $\mathcal M = \mbZ^d\times(E-E)$, where $E=\{0,\mathbf{e}_1,\ldots,\mathbf{e}_{m-1}\}$ is the standard basis of $\mbZ^{m-1}$ plus the zero vector. We define the following function $J:\mathcal M\to\mbR$:
\begin{equation}
    J(n_1,\ldots,n_d,l_1,\ldots,l_{m-1}) = \|n_1 \mathbf{a}_1+\ldots+n_d \mathbf{a}_d+l_1 \mathbf{b}_1+\ldots+l_{m-1} \mathbf{b}_{m-1}\|^2\text.
\end{equation}
This function can be extended uniquely to a quadratic form over $\mbR^{d+m-1}$, represented by a symmetric matrix $J\in\mathcal{S}^{d+m-1}$, namely
\begin{equation}
    \label{eq:Jblocks}
    J = \left(\begin{array}{cc} Q& R^T\\ R& S\end{array}\right)\text,
\end{equation}
where $Q_{ij} = \mathbf{a}_i\cdot\mathbf{a}_j$, $R_{ij}=\mathbf{b}_i\cdot\mathbf{a}_j$, and $S_{ij}=\mathbf{b}_i\cdot\mathbf{b}_j$. The quadratic form, and therefore also the function $J: \mathcal M\to\mathbb R$, determines $\Xi$ up to rotation.

The packing radius of $\Xi$ is given by
\begin{equation}
    \begin{aligned}
	\rho(\Xi) &= \min_{\mathbf{l}, \mathbf{l}'\in \Lambda, 0\le i,j\le m-1} \|\mathbf{l}-\mathbf{l}'+\mathbf{b}_i-\mathbf{b}_j\|\\
	&=\left(\min_{\mathbf{l}, \in \Lambda, 0\le i,j\le m-1} \|\mathbf{l}+\mathbf{b}_i-\mathbf{b}_j\|^2\right)^{1/2}\\
	&=\tfrac12 (\min J)^{1/2}\text,
    \end{aligned}
\end{equation}
where $\min J$ is the minimum of $J$ over $\mathcal M\setminus\{0\}$.

Let $\mcR(\lambda) = \{ J:\min J\ge\lambda\}$. This set is defined as the intersection of linear inequalities:
\begin{equation}
    \label{eq:ineqper}
    J\in \mcR(\lambda) \quad \text{ iff } \quad J(\mbk) = \langle \mbk\mbk^T, J \rangle \ge \lambda \quad \text{ for all } \mbk\in\mcM\setminus\{0\}\text.
\end{equation}
We want to show that $\mcR(\lambda)$, like the Ryshkov polyhedron, is a locally finite polyhedron.

\begin{lem}
    Let $Q:\mbR^d\to\mbR$ be a positive definite quadratic form, satisfying $Q(\mbn)\ge\lambda$ for all $\mbn\in\mbZ^d$ and
    $\mathrm{tr}\,Q\le C$. Then any vector $\mathbf{x}\in\mbR^d$ satisfying $Q(\mathbf{x})\le1$ also satisfies
    $\|\mathbf{x}\|\le M$, where $M$ depends on $d$, $\lambda$, and $C$.
\end{lem}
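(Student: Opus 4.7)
The plan is to reduce the lemma to a lower bound on the smallest eigenvalue of $Q$, and to obtain that lower bound by combining the trace hypothesis (which caps all eigenvalues from above) with Minkowski's first theorem (which caps the determinant from below once the arithmetic minimum of $Q$ is bounded below). Concretely, let $\mu_1\le\mu_2\le\cdots\le\mu_d$ denote the eigenvalues of $Q$. Any $\mathbf{x}\in\mbR^d$ with $Q(\mathbf{x})\le 1$ satisfies $\mu_1\|\mathbf{x}\|^2\le Q(\mathbf{x})\le 1$, so it is enough to bound $\mu_1$ from below by a positive constant depending only on $d$, $\lambda$, and $C$; then $M=\mu_1^{-1/2}$ works.

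First I would extract from $\mathrm{tr}\,Q\le C$ the crude bound $\mu_i\le C$ for every $i$, hence $\det Q = \mu_1\mu_2\cdots\mu_d \le C^{d-1}\mu_1$. This turns a lower bound on $\det Q$ into a lower bound on $\mu_1$.

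Next I would invoke Minkowski's theorem on the arithmetic minimum of a positive definite form: there is a constant $\gamma_d$ (Hermite's constant) such that $\min_{\mbn\in\mbZ^d\setminus\{0\}}Q(\mbn)\le\gamma_d(\det Q)^{1/d}$. Combined with the hypothesis $Q(\mbn)\ge\lambda$ on nonzero integer vectors, this gives $\det Q\ge(\lambda/\gamma_d)^d$. Substituting into the previous inequality yields $\mu_1\ge(\lambda/\gamma_d)^d/C^{d-1}$, from which the lemma follows with, e.g., $M = C^{(d-1)/2}(\gamma_d/\lambda)^{d/2}$.

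The only real content is the lower bound on $\mu_1$; the trace bound alone does not prevent $Q$ from having an arbitrarily flat direction, and it is precisely the integer-minimum hypothesis that rules this out (a very flat direction, combined with the other directions being of size $O(\sqrt{C})$, would allow a short lattice vector by the standard pigeonhole/Minkowski argument). Thus the ``hard'' step is identifying Minkowski's theorem as the correct bridge between the arithmetic hypothesis $\min Q\ge\lambda$ and the geometric quantity $\det Q$; everything else is a one-line eigenvalue estimate.
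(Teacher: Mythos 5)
Your proof is correct, and since the paper itself defers to Theorem~3.1 of Sch\"urmann's book rather than giving its own argument, there is nothing in the paper to compare against directly; your route via Hermite's constant (to lower-bound $\det Q$ from $\min Q\ge\lambda$) combined with the trace bound (to convert a determinant lower bound into a lower bound on $\mu_1$) is the standard argument and is almost certainly the one used in the cited reference. One minor point worth making explicit: the paper's hypothesis ``$Q(\mbn)\ge\lambda$ for all $\mbn\in\mbZ^d$'' must be read as holding for all \emph{nonzero} integer vectors (since $Q(0)=0$), which you correctly and implicitly assume when invoking Minkowski's bound on the arithmetic minimum.
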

The proof is given in Theorem 3.1 of~\cite{schurmann2009computational}.

\begin{thm} 
    Let $W=\{J\in \mcR(\lambda):|J_{ij}|\le L\}$.
    There are only finitely many $\mbk\in\mathcal M$ such that for some $J\in W$, $J(\mbk)\le\lambda$ .
\end{thm}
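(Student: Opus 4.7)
The plan is to use the block decomposition~\eqref{eq:Jblocks} to split any $\mbk\in\mcM$ as $\mbk=(\mbn,\mbl)$ with $\mbn\in\mbZ^d$ and $\mbl\in E-E$, and then reduce the problem to a uniform version of the previous lemma applied to the upper-left block $Q$ of $J$. The set $E-E$ contains at most $m^2$ elements, so it suffices to prove that for each fixed $\mbl\in E-E$, only finitely many $\mbn\in\mbZ^d$ can satisfy $J(\mbn,\mbl)\le \lambda$ for some $J\in W$.

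For $J\in W$, specializing~\eqref{eq:ineqper} to the vectors $(\mbn,0)\in\mcM\setminus\{0\}$ yields $Q(\mbn)=J(\mbn,0)\ge\lambda$ for all $\mbn\in\mbZ^d\setminus\{0\}$. A short standard argument (approximating a putative non-positive eigenvector of $Q$ by integer vectors and contradicting the lower bound, or invoking Minkowski) forces $Q$ to be positive definite whenever $\lambda>0$. Since the diagonal entries of $Q$ appear among those of $J$, $\mathrm{tr}\,Q\le dL$. Hence $Q$ satisfies the hypotheses of the previous lemma with constants that do not depend on the particular $J\in W$, producing a uniform $M=M(d,\lambda,L)$ such that $Q(\mathbf{x})\le 1$ forces $\|\mathbf{x}\|\le M$. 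By homogeneity, this is equivalent to the uniform lower bound $Q(\mbn)\ge \|\mbn\|^2/M^2$.

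The remaining terms of $J(\mbn,\mbl)=Q(\mbn)+2\mbn^T R^T\mbl+\mbl^T S\mbl$ are easy to control: the off-diagonal blocks satisfy $|R_{ij}|,|S_{ij}|\le L$, and $\mbl$ ranges over a bounded finite set, so there exist constants $C_1,C_2$ depending only on $d,m,L$ with $|2\mbn^T R^T\mbl+\mbl^T S\mbl|\le C_1\|\mbn\|+C_2$. Combining,
\[
J(\mbn,\mbl)\;\ge\; \frac{\|\mbn\|^2}{M^2}-C_1\|\mbn\|-C_2,
\]
and the right-hand side exceeds $\lambda$ once $\|\mbn\|$ exceeds a threshold depending only on $d,m,L,\lambda$. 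Thus for each of the finitely many $\mbl\in E-E$, the set of admissible $\mbn$ lies in a bounded region of $\mbZ^d$, and is therefore finite; the theorem follows.

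The main obstacle is not the estimation, which is routine, but the \emph{uniformity} in $J\in W$ of the lower eigenvalue estimate for $Q$. The previous lemma is exactly the tool that converts the combinatorial hypothesis $Q|_{\mbZ^d\setminus\{0\}}\ge\lambda$ together with the trace bound into a geometric bound $Q(\mathbf{x})\ge c\|\mathbf{x}\|^2$ valid on all of $\mbR^d$ with a constant $c$ depending only on $d,\lambda,L$. Without this uniformity, the finiteness would only hold for each individual $J$, not across the entire slice $W$, and the theorem would collapse.
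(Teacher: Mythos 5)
Your proof is correct and takes essentially the same route as the paper: split $\mbk=(\mbn,\mbl)$, note that $\mbl$ ranges over a finite set, and use the lemma (with the trace bound coming from $|J_{ij}|\le L$) to obtain a uniform lower eigenvalue bound $Q\ge M^{-2}I$ valid across all of $W$, after which the quadratic growth in $\mbn$ dominates. The paper's argument completes the square and applies the lemma twice (first to bound $\mbq^TQ\mbq=\mbl^TRQ^{-1}R^T\mbl$, then to bound $\|\mbn+\mbq\|$), whereas you apply it once and absorb the cross terms by elementary estimation; this is a cosmetic difference and the substance is identical.
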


\begin{proof}
Let $\mbk=(\mbn,\mbl)$. There are only finitely many choices for $\mbl$. So, we need only show that for any fixed choice of $\mbl$, there are finitely many $\mbn\in\mbZ^d$ such that $J(\mbk)\le\lambda$ for some $J\in W$. We have 
\begin{equation}
    \begin{aligned}
	\label{eq:inhom}
	J(\mbk) &= \mbn^TQ\mbn + 2\mbl^T R\mbn+\mbl^T S\mbl\\
	&= (\mbn+\mbq)^TQ(\mbn+\mbq)+\mbl^T S \mbl-\mbq^T Q\mbq\text,
    \end{aligned}
\end{equation}
where $\mbq=Q^{-1}R^T\mbl$. Therefore, if $J(\mbk)\le\lambda$ then $(\mbn+\mbq)^TQ(\mbn+\mbq)\le\lambda+\mbq^TQ\mbq-\mbl^TS\mbl\le\lambda+\mbq^TQ\mbq = \lambda+\mbl^TRQ^{-1}R^T\mbl$.

Since $\mbl^TRR^T\mbl\le 4d L^2$ and $\mathrm{tr}Q\le dL$, we have from the lemma that $\mbl^TRQ^{-1}R^T\mbl\le 4dL^2 M^2$. Therefore, if $J(\mbk)\le\lambda$ then $(\mbn+\mbq)^TQ(\mbn+\mbq)\le \lambda+4dL^2 M^2$. Again, from the lemma, we have that $(\mbn+\mbq)^T(\mbn+\mbq)\le \lambda M^2 + 4dL^2 M^4$. Therefore, there can only be finitely many choices of $\mbn$ for each choice of $\mbl$ such that $J(\mbk)\le\lambda$ for some $J\in W$.
\end{proof}

So, any intersection of $\mcR(\lambda)$ with a compact polytope is a compact polytope and $\mcR(\lambda)$ is a locally finite polyhedron. We call it the Ryshkov-like polyhedron. Moreover, since each vertex of $\mcR(\lambda)$ is the unique solution of a linear system of equations with integer coefficients, the vertices of $\mcR(\lambda)$ have rational coordinates.

\subsection{Symmetries of the Ryshkov-like polyhedron}
\label{sec:symmetries}

The symmetries of $\mcR(\lambda)$ are tightly linked with the symmetries of the set $\mathcal M\subset\mbZ^{d+m-1}$. Namely, if $T:\mbR^{d+m-1}\to\mbR^{d+m-1}$ is a linear map such that $T(\mathcal M) = \mathcal M$, then $J\mapsto J\circ T$ maps $\mcR(\lambda)$ to itself.

We decompose $T$ into blocks with a top-left block of size $d\times d$. It is easy to see that the bottom-left block must be zero, or else there is always some $\mbk\in\mathcal M$ such that $T(\mbk) = (\mbn,\mbl)$, where $\mbl\not\in (E-E)$. Therefore we write
\begin{equation}
    \label{eq:Tblocks}
    T = \left(\begin{array}{cc} U& V\\ 0& W\end{array}\right)\text.
\end{equation}
As a consequence, we also have that $U$ and $W$ must be invertible as maps $\mbZ^d\to\mbZ^d$ and $(E-E)\to(E-E)$. Therefore, $U\in GL_d(\mbZ)$ and $W$ is a permutation of $E$, namely
\begin{equation}
    W = \left(\begin{array}{cccc} 0 & 1 & 0 & \cdots \\ 0 & 0 & 1 & \cdots \\ \vdots & \vdots & \vdots &\ddots\end{array}\right)
    \Pi \left(\begin{array}{ccc} -1 & -1 & \cdots \\ 1 & 0 & \cdots \\ 0 & 1 & \cdots \\ \vdots & \vdots & \ddots\end{array}\right)\text,
\end{equation}
where $\Pi$ is a $m\times m$ permutation matrix.

It is easy to check that whenever $U\in GL_n(d)$, $W$ is a permutation of $E$, and $V$ is an arbitrary $d\times(m-1)$ integer matrix, then $T$ is a symmetry of $\mathcal M$. Let us call this group of symmetries $\Gamma$, and use its members to act on elements of $\mathcal M$ by $\mbk\mapsto T\mbk$ or elements of $\mcR(\lambda)$ by $J\mapsto J\circ T$. We conjecture that for any $\lambda>0$, $\mcR(\lambda)$ has finitely many faces (in particular vertices) up to the action of $\Gamma$, but we do not have a proof, except in the cases where we have enumerated the vertices (see Sec.~\ref{sec:nr}), and found, by the fact of the algorithm halting, that there were finitely many vertices.

\subsection{The rank constraint}
\label{sec:rankconstraint}

Equation~\eqref{eq:Jblocks} described how to obtain a quadratic form $J\in\mcR(\lambda)$ from an $m$-periodic packing $\Xi$ of packing radius $\rho(\Xi) \ge \tfrac12 \lambda^{1/2}$. However, the reverse operation is not always possible. Clearly, $\mrank\,J = d$ is a necessary condition. In fact, it is also sufficient, since $J\in\mcR(\lambda)$ for $\lambda>0$ implies that $Q$ is positive definite, and therefore $\mathbf{a}_1,\ldots,\mathbf{a}_d$ can be recovered through, e.g., Cholesky decomposition, and $\mathbf{b}_1,\ldots,\mathbf{b}_{m-1}$ by solving $R_{ij}=\mathbf{b}_i\cdot\mathbf{a}_j$.

Let $\mcR_0(\lambda) = \{J\in\mcR(\lambda): \mrank\,J = d\}$. Since $Q$ is positive definite within $\mcR(\lambda)$, we can write this condition as the vanishing of the Schur complement $S - R Q^{-1} R^T = 0$. For $m=2$, we also have two simpler equivalent forms for this constraint: $\det J = 0$ or $\lambda_\text{min}(J) = 0$, where $\lambda_\text{min}$ denotes the smallest eigenvalue.

\subsection{The density objective}\label{sec:density}

We wish to find the maximum of $\delta(\Xi)$ among $m$-periodic sets $\Xi$ of packing radius at least $\rho_0$. This is equivalent to the minimization problem
\begin{equation}
    \label{eq:vol-J}
    \text{minimize } \mathrm{obj}(J) = \det Q(J)
    \text{ subject to } J\in \mcR_0(4\rho_0^2)\text,
\end{equation}
where $Q(J)$ is the top-left block of $J$ (see \eqref{eq:Jblocks}). Since the problem is equivalent for different values of $\rho_0$, we will often use for convenience the explicit value $\rho_0=1$, which corresponds to $\lambda = 4\rho_0^2 = 4$. While $\mathrm{obj}(J)$, like the objective in Voronoi's algorithm, is quasiconcave, the nonlinearity of the rank constraints does not allow for a straightforward characterization of the local minima.

A vivid illustration of a new type of local minima that can arise is a family of 9-dimensional 2-periodic packing arrangements known collectively as the \textit{fluid diamond packing}~\cite[p.\ 41]{schurmann2009computational}. The lattice $D_9 = \{\mbn\in\mbZ^n: n_1+n_2+\ldots+n_9 \in 2\mbZ\}$ has a packing radius $\rho = 1/\sqrt{2}$ and is an extreme lattice,  so that any nearby lattice has a smaller packing radius or a larger determinant. The deep holes of a lattice are the points that maximize the distance to the nearest lattice point. For $D_9$, the deep holes are at $D_9 + \mathbf{b}_0$, where $\mathbf{b}_0 = (\tfrac12,\tfrac12,\ldots,\tfrac12)$, and are at a distance of $3/2>2\rho$ from the lattice. So, $D_9^+(\mathbf{b}) = D_9 \cup (D_9+\mathbf{b})$ has the same packing radius, but double the density when $\mathbf{b}$ is sufficiently close to $\mathbf{b}_0$. As a 2-periodic point set, this is clearly a local maximum of the density since a nearby 2-periodic point must be of the form $\Lambda\cup(\Lambda+\mathbf{b}')$, where $\Lambda$ is nearby $D_9$, and therefore has a larger determinant or a packing radius that is no larger. Thus we have a 9-dimensional family 2-periodic point sets that are all locally optimal. In fact, the fluid diamond packing achieves the highest known packing density for spheres in 9 dimensions. A similar situation can be constructed with any extreme lattice whose covering radius (the distance of the deep holes from the lattice) is more than twice its packing radius. Another example, therefore can be constructed with the lattice $A_8$, but this example is less often discussed, presumably because other packings achieve higher density in that dimension.

This vivid example should discourage us from attempting to transport the methods of the Voronoi algorithm, where a critical result was that all local optima lie on vertices and are therefore isolated. However, we will show that, at least in the case $m=2$, the packing constructed this way are the only possible problem we may encounter. Namely, for $m=2$, a local minimum of $\mathrm{obj}(J)$ in $\mcR_0(\lambda)$ is either an isolated minimum lying on an edge of $\mcR(\lambda)$ or is the union of two translates of an extreme lattice whose covering radius is more than twice the packing radius.

To start, we define a sufficient condition for local optimality by linearizing all of the constraints, which we call \textit{algebraic extremeness}, and prove that it is indeed a sufficient condition. Let $J\in\mcR_0(\lambda)$. The set $\{J':\mrank\, J' = d\}$ is a $[\tfrac12(d+m)(d+m-1) - \tfrac12 m(m-1)]$-dimensional manifold in the neighborhood of $J$, and its tangent space is given by $$T_J = J + \left(\mathcal S[N(J)]\right)^\perp = \{ J' : \langle J' - J, \mathbf{u}\mathbf{u}^T\rangle = 0 \text{ for all } \mathbf{u}\in N(J)\}\text,$$ where $N(J)$ is the null space of $J$, and $\mathcal S[N(J)]$ is the space of quadratic forms over $N(J)$. The linear inequality constraints, encoded in the polyhedron $\mcR(\lambda)$, give rise in the neighborhood of $J$ to the cone $$C_J = \{ J' : J'(\mbk) \ge \lambda\text{ for all } \mbk\in\mcM\text{ such that }J(\mbk)=\lambda\}\text.$$ Finally, the gradient of the objective $\mathrm{obj}(J)$ is proportional to $$G_J = \left(\begin{array}{cc} Q^{-1} & 0\\0 & 0\end{array}\right)\text.$$

\begin{dfn}
    \label{def:algext}
    A configuration $J\in\mcR_0(\lambda)$ is called \textit{algebraically extreme}
    if $\langle J'-J,G_J\rangle > 0$ for all $J'\in C_J\cap T_J$ except $J'=J$.
\end{dfn}

\begin{thm}
    \label{thm:suff}
    Let $J\in\mcR_0(\lambda)$ be algebraically extreme. Then
    \begin{enumerate}
       \item $J$ is an isolated minimum of $\mathrm{obj}(J')$~\eqref{eq:vol-J} among $J'\in\mcR_0(\lambda)$.
       \item $J$ lies on a $\tfrac12 m(m+1)$-dimensional face of $\mcR(\lambda)$.
   \end{enumerate}
\end{thm}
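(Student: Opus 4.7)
My plan is to extract both conclusions from the fact that algebraic extremeness is a purely linear-algebraic condition on the cone $C_J\cap T_J$, while $\mcR_0(\lambda)$ near $J$ differs from this cone only by the nonlinear rank constraint. I will use Theorem~1 to reduce the inequality side to finitely many constraints in a neighbourhood, an implicit-function-type parametrisation of the rank-$d$ manifold, and a compactness argument to absorb the quadratic slack coming from the nonlinearity; once this is in hand, Taylor expansion finishes part~1 and part~2 follows from a Farkas-style dualisation plus a dimension count.

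\textbf{Part~1 (strict local minimum).} First I invoke Theorem~1 to restrict to the finite active set $A=\{\mbk\in\mcM:J(\mbk)=\lambda\}$: in a sufficiently small neighbourhood of $J$, these are the only relevant constraints. Next, the rank-$d$ manifold is smooth with tangent space $T_J$, so any nearby rank-$d$ form can be written
\[
J'=J+H_T+\eta(H_T),\quad H_T\in(\mathcal{S}[N(J)])^\perp,\ \eta(H_T)\in\mathcal{S}[N(J)],\ \|\eta(H_T)\|=O(\|H_T\|^2).
\]
Feasibility $J'\in\mcR(\lambda)$ for $\mbk\in A$ then reads $\langle H_T,\mbk\mbk^T\rangle\ge -C\|H_T\|^2$, so the unit direction $\hat H_T=H_T/\|H_T\|$ sits in an $O(\|H_T\|)$-enlargement of the polyhedral cone $(C_J-J)\cap(T_J-J)$. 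Algebraic extremeness gives $\langle\cdot,G_J\rangle\ge c>0$ on the unit sphere of this cone, and by continuity the bound $\ge c/2$ persists on any sufficiently small outward enlargement, yielding $\langle H_T,G_J\rangle\ge(c/2)\|H_T\|$ for $\|H_T\|$ small. Taylor expansion of the objective then gives
\[
f(J+H)-f(J)=(\det Q)\langle H_T,G_J\rangle+O(\|H_T\|^2)\ge (\det Q)(c/2)\|H_T\|-O(\|H_T\|^2),
\]
where the contribution $\langle\eta(H_T),G_J\rangle$ has been absorbed into the error term. This is strictly positive for small nonzero $H$, establishing the strict local minimum.

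\textbf{Part~2 (face dimension).} The minimal face of $\mcR(\lambda)$ through $J$ has affine hull $J+L^\perp$, where $L=\operatorname{span}\{\mbk\mbk^T:\mbk\in A\}$. I would dualise algebraic extremeness via Farkas: strict positivity of $\langle\cdot,G_J\rangle$ on $(C_J-J)\cap(T_J-J)\setminus\{0\}$ forces this intersection cone to be pointed, which is equivalent to the span condition $L+\mathcal{S}[N(J)]=\mathcal{S}^{d+m-1}$. A dimension count now gives $\dim L^\perp\le\dim\mathcal{S}[N(J)]=\tfrac12 m(m-1)\le\tfrac12 m(m+1)$, so the minimal face through $J$ fits inside a face of dimension exactly $\tfrac12 m(m+1)$, proving the claim. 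The hard part is the compactness/continuity argument of part~1, showing that strict positivity on the polyhedral cone survives the $O(\|H_T\|)$ enlargement imposed by the nonlinear rank constraint; everything else is bookkeeping once Theorem~1 and the smooth parametrisation of the rank-$d$ manifold are in hand.
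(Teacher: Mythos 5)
Your proof is correct and mirrors the paper's: Part~1 spells out the standard first-order sufficient optimality criterion for constrained problems, which the paper merely cites, and Part~2 is the same pointedness observation about $C_J\cap T_J$, phrased directly (pointed implies the span condition $L+\mathcal S[N(J)]=\mathcal S^{d+m-1}$) rather than contrapositively (a large face would contribute a line to the cone). One detail you get sharper than the paper does: since $\dim N(J)=m-1$, the codimension of $T_J$ is $\dim\mathcal S[N(J)]=\tfrac12 m(m-1)$, and your dimension count accordingly bounds the minimal face through $J$ by $\tfrac12 m(m-1)$; for $m=2$ this is dimension at most $1$, exactly the vertices-and-edges restriction the algorithm actually uses. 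The paper's own proof asserts that $T_J$ has codimension $\tfrac12 m(m+1)$, which appears to be a slip for $\tfrac12 m(m-1)$; your count is the correct one, and the theorem's weaker $\tfrac12 m(m+1)$ follows a fortiori. Your compactness-plus-enlargement argument in Part~1 correctly handles the quadratic slack coming from the graph parametrisation of the rank-$d$ manifold and is exactly the standard proof of the criterion the paper references.
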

\begin{proof}
    The first claim follows as the usual sufficient optimality criterion for a smooth optimization problem with smooth equality and inequality constraints \cite{bazaraa2006nonlinear}.

    To prove the second claim, note that if $J$ does not lie on a $\tfrac12 m(m+1)$-dimensional face of $\mcR(\lambda)$, then
    $C_J$ contains an affine space passing through $J$ of dimension $\tfrac12 m(m+1)+1$. Since $T_J$ is an affine space of codimension $\tfrac12m(m+1)$
    passing through $J$, then $C_J\cap T_J$ must include a line, and it is not possible
    for $\langle J'-J,G_J\rangle$ to be positive everywhere on this line, even with $J$ excepted. Therefore, $J$ is not algebraically extreme.
\end{proof}

\begin{dfn}
    \label{def:fluid}
    A configuration $J\in\mcR_0(\lambda)$ is called a \textit{fluid packing}
    if it sits on a nonconstant continuous curve $J'\colon[0,1]\to\mcR_0(\lambda)$ of the form
    \begin{equation}
	\label{eq:Jblocks-fluid}
	J'(t) = \left(\begin{array}{cc} Q& R+t \tilde{R}\\ R^T + \tilde{R}^T& S+ t\tilde{S}+t^2\tilde{R}^T Q^{-1} \tilde{R}
	\end{array}\right)\text,
    \end{equation}
    such that $J'(0)=J$ and $J'(t)$ is a local minimum
    of $\mathrm{obj}$ over $\mcR_0(\lambda)$ for all $0\le t \le 1$.
\end{dfn}

One example of a fluid packing is $D_9^+(\mathbf{b}_0)$, which is part of the 9-dimensional fluid diamond packing described above. Specifically, let $A$ be a matrix whose rows form a basis for $D_9$, let $\mathbf{b}_1$ be a point such that $\|\mathbf{b}_0-\mathbf{b}_1\| < 3/2 -\sqrt{2}$, and let
\begin{equation}
    \label{eq:Jblocks-fluid-diamond}
    J'(t) = \left(\begin{array}{cc} AA^T& (1-t) A\mathbf{b}(t)\\ \mathbf{b}(t)^TA^T& \|\mathbf{b}(t)\|^2\end{array}\right)\text,
\end{equation}
where $\mathbf{b}(t) = (1-t) \mathbf{b}_0 + t \mathbf{b}_1$.

When $m=2$, every fluid packing is the union of two extreme lattices. The continuous curve in Definition \ref{def:fluid} when $m=2$ must be of the form \eqref{eq:Jblocks-fluid-diamond}. Therefore, $J'(t)(\mathbf k)$ is a nonconstant quadratic function of $t$ for every $\mathbf{k}=(\mathbf{n},\pm 1)$ and there is some $0 < t' < 1$ such that $J'(t)(\mathbf{k}) > \lambda$ for every such $\mathbf{k}$. Because $J'(t')$ is a local minimum of $\mathrm{obj}$ over $\mcR_0(\lambda)$, $Q$ is a local minimum of $\det Q''$ over $\{ Q'': \mathbf{n}^T Q''\mathbf{n} \ge \lambda \text{ for all } \mathbf{n}\in\mathbf{Z}^d\}$.

\begin{thm}
    \label{thm:deg}
    Let $m=2$, and let $J\in\mcR_0(\lambda)$. If $J$ is a local minimum of $\mathrm{obj}(J')$ among
    $J'\in\mcR_0(\lambda)$ then one of the following is true:
    \begin{enumerate}
	\item $J$ is algebraically extreme.
	\item $J$ is a fluid packing.
    \end{enumerate}
\end{thm}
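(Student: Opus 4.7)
The plan is to proceed by contrapositive: assume $J\in\mathcal{R}_0(\lambda)$ is a local minimum of $f$ and is not algebraically extreme; I will show $J$ must be a fluid packing. The starting point is the KKT conditions at $J$ for the problem of minimizing $\det Q$ over $\mathcal{R}_0(\lambda)$: writing the inequality constraints as $\mathbf{k}^T J\mathbf{k}\ge\lambda$ for $\mathbf{k}\in\mathcal{M}\setminus\{0\}$ and (for $m=2$) the rank constraint as $\det J=0$, and assuming a constraint qualification, one obtains multipliers $\mu_\mathbf{k}\ge 0$ and $\nu\in\mathbb{R}$ with
\begin{equation*}
G_J = \sum_{\mathbf{k}:\,J(\mathbf{k})=\lambda} \mu_\mathbf{k}\,\mathbf{k}\mathbf{k}^T + \nu\, uu^T,
\end{equation*}
where $u$ spans $N(J)$. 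Failure of algebraic extremeness provides a nonzero $H$ in $(C_J-J)\cap(T_J-J)$ with $\langle H,G_J\rangle\le 0$. Pairing the KKT identity with $H$, and using $u^T H u=0$ from tangency,
\begin{equation*}
\langle H,G_J\rangle \;=\; \sum \mu_\mathbf{k}\,\mathbf{k}^T H\mathbf{k} \;\ge\; 0,
\end{equation*}
so equality holds and $\mathbf{k}^T H\mathbf{k}=0$ for every active $\mathbf{k}$ with $\mu_\mathbf{k}>0$. In particular $\operatorname{tr}(Q^{-1}H_Q)=0$, where $H_Q$ is the top-left $d\times d$ block of $H$.

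The crucial step is to upgrade this to $H_Q=0$, so that $H$ is a genuine fluid direction. Lifting $H$ to a smooth curve $J(t)=J+tH+t^2 H_2+O(t^3)$ on the rank-$d$ manifold via the implicit function theorem applied to $\det J'=0$, and expanding $\log\det Q(t)$,
\begin{equation*}
\log\det Q(t) = \log\det Q + t^2\Bigl(\operatorname{tr}(Q^{-1}H_{2,Q}) - \tfrac{1}{2}\operatorname{tr}\bigl((Q^{-1}H_Q)^2\bigr)\Bigr) + O(t^3),
\end{equation*}
where $H_{2,Q}$ is the top-left block of $H_2$. If $H_Q\neq 0$ then $\operatorname{tr}((Q^{-1}H_Q)^2)>0$, so the quadratic coefficient can be made strictly negative by choosing $H_2$ with small enough $\operatorname{tr}(Q^{-1}H_{2,Q})$, producing a curve with $\det Q(t)<\det Q$ for small $t$. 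The plan is to pick $H_2$ so that (i) the manifold condition $\det J(t)=0$ holds, which constrains only the scalar $u^T H_2 u$; (ii) the active-cone inequalities with $\mathbf{k}^T H\mathbf{k}=0$ are preserved at second order, i.e.\ $\mathbf{k}^T H_2\mathbf{k}\ge 0$; and (iii) the descent property above holds. Achieving (ii) and (iii) simultaneously is the main obstacle: the natural tool is a Slater-type perturbation $H\mapsto H+\varepsilon H'$ with $H'$ in the relative interior of $C_J\cap T_J$, used to move away from second-order degeneracies before constructing the curve. This is exactly where the hypothesis $m=2$ enters: since $N(J)$ is one-dimensional, the manifold reduces to a single scalar equation, leaving enough freedom in $H_2$ for the perturbation argument to succeed.

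Once $H_Q=0$ is established, $H$ is a nonzero fluid tangent direction, and the curve $J(t)$ it generates stays in the fluid sheet $\{J':Q(J')=Q\}\cap\mathcal{R}_0(\lambda)$, so $Q(t)=Q$ and $f(J(t))=\det Q$ are constant. To conclude $J$ is a fluid packing, one must verify each $J(t)$ is itself a local minimum of $f$: since $G_{J(t)}=G_J$ is constant, the active set of $J(t)$ varies semicontinuously, and a continuous deformation of the KKT multipliers along the curve (obtained by a standard implicit-function-theorem argument applied to the KKT system) provides an optimality certificate at every $J(t)$ for small $t$. Since $H\neq 0$ while its top-left block vanishes, some off-diagonal or bottom-right block of $H$ is nonzero, so the curve is nonconstant, and $J$ is a fluid packing in the sense of Definition~\ref{def:fluid}.
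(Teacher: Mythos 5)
Your plan aligns with the paper's strategy in outline — KKT at $J$, show the active critical direction has vanishing $Q$-block, then build a fluid curve — but the central step is left as an acknowledged gap, and the paper closes it much more directly. After you establish $\langle H,G_J\rangle=0$ and $\mathbf{k}^T H\mathbf{k}=0$ for $\mu_\mathbf{k}>0$, the paper does \emph{not} attempt to construct a descent curve by choosing $H_2$ compatible with all active inequalities; instead it invokes the standard second-order necessary KKT condition, $\langle\tilde J,(\mathrm{hess}\,L)\tilde J\rangle\ge0$ on the critical cone, where $L=f+\sum u_i g_i + v h$. Since $g_i$ is affine, $\mathrm{hess}\,L = \mathrm{hess}\,f + v\,\mathrm{hess}\,h$; the sign $v\ge 0$ was extracted from the $\partial_{d+1,d+1}$ component of the KKT identity, and both $\mathrm{hess}\,f$ (by quasiconcavity of $\det$ on level sets, and $\langle\nabla f,\tilde J\rangle=0$ on the critical cone) and $\mathrm{hess}\,h=\mathrm{hess}\,\lambda_{\min}$ (by concavity of $\lambda_{\min}$) are negative semidefinite. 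So both terms must vanish on $H$, and the vanishing of $\langle H,(\mathrm{hess}\,f)H\rangle$ with $\mathrm{tr}(Q^{-1}H_Q)=0$ forces $H_Q=0$. Your ``Slater-type perturbation'' to control the second-order behavior of the active inequalities is exactly the bookkeeping that the second-order necessary condition already packages; you should simply cite that criterion rather than trying to re-derive it, since as written ``Achieving (ii) and (iii) simultaneously is the main obstacle'' means the proof is not complete.

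Two further issues, both fixable but worth noting. First, the implicit-function-theorem lift of $H$ to the rank-$d$ manifold does not by itself produce a curve with $Q(t)\equiv Q$; you need to make an explicit choice. The paper sets $Q'(t)=Q$, $R'(t)=R+t\tilde R$, $S'(t)=S+t\tilde S+t^2\tilde R^T Q^{-1}\tilde R$, which kills the Schur complement exactly and makes $g_i(J'(t))\le 0$ for small $t>0$ a direct computation using the sign of $\mathbf{l}_i^T\tilde R^T Q^{-1}\tilde R\,\mathbf{l}_i$. Second, to conclude that each $J(t)$ is a local minimum you do not need to track KKT multipliers along the curve (an argument that would anyway require some regularity): since $f(J'(t))=f(J)$ and $J$ is a local minimum, any point of $\mathcal{R}_0(4)$ near $J'(t)$ is near $J$ for $t$ small, so $f\ge f(J)=f(J'(t))$ there, which is all that Definition~\ref{def:fluid} requires.
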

\begin{proof}
    Without loss of generality, let $\lambda=4$. Let $\{\mbk_1,\ldots,\mbk_s\} = \{\mbk:J(\mbk)=4\} = J^{-1}(4)$.
    The quadratic form $J$ is a local optimum of the problem
    \begin{equation}
        \label{eq:program}
	    \begin{aligned}
	        \text{Minimize\ } & \mathrm{obj}(J') \text,\\
	        \text{subj.\ to\ } & g_i(J') \triangleq 4-\langle\mbk_i\mbk_i^T,J'\rangle \le 0 \text{ for } i=1,\ldots,s \text,\\
	        & h(J') \triangleq \lambda_\text{min} (J') = 0\text.
	    \end{aligned}
    \end{equation}
    As a consequence, it satisfies the Karush-Kuhn-Tucker condition. Namely, there exist $u_\mbk\ge 0$ and $v$ such that
    $\nabla \mathrm{obj} + \sum_{i=1}^{s} u_i\nabla g_i + v \nabla h = 0$.
    We consider the derivative with respect to $J'_{d+1,d+1}$, which we denote $\partial_{d+1,d+1}$.
    Since $\partial_{d+1,d+1} \mathrm{obj} = 0$, $\partial_{d+1,d+1} g_i \le 0$, and $\partial_{d+1,d+1} h > 0$, we have that $v\ge0$.
    The Lagrangian corresponding to these KKT coefficients is $L = \mathrm{obj} + \sum u_i g_i + v h$.
    Let $C_0$ be the marginal cone
    \begin{equation}
	    \begin{aligned}
	        C_0=\{\tilde{J}: &\langle \nabla g_i,\tilde{J}\rangle = 0 \text{ for all }i\in I^+,\\
	        & \langle \nabla g_i,\tilde{J}\rangle \ge 0 \text{ for all }i\in I^0,\\
	        & \langle\nabla h,\tilde{J}\rangle=0\}\text,
	    \end{aligned}
    \end{equation}
    where $I^+ = \{i : u_i>0\}$ and $I^0=\{i:u_i=0\}$.
    Suppose that $J$ is not algebraically extreme, then the marginal cone is nontrivial. Consider the
    Hessian of the Lagrangian $H = \mathrm{hess}\,L = \mathrm{hess}\,\mathrm{obj} + v \mathrm{hess}\,h$, where, e.g.,
    $\langle \tilde J, (\mathrm{hess}\,\mathrm{obj})\tilde J\rangle = (d^2/dt^2)\mathrm{obj}(J+t\tilde{J})$.
    A necessary condition for $J$ to be a local optimum of \eqref{eq:program} is that $\langle \tilde{J},H\tilde{J}\rangle$
    is nonnegative for all $\tilde{J}\in C_0$ \cite[p.\ 216]{bazaraa2006nonlinear}.
    However, both $\mathrm{hess}\,\mathrm{obj}$ and $\mathrm{hess}\,h$ are negative semidefinite (for $\mathrm{obj}$, this
    is the quasiconcavity of the determinant function; for $h$, this is a well-known result of second-order eigenvalue
    perturbation theory). It follows that $C_0$ must lie in the null-space of $\mathrm{hess}\,\mathrm{obj}$, that is, the $Q$ component
    of any vector in the marginal cone is zero.

    Consider a nonzero element $\tilde{J}\in C_0$, with blocks $\tilde{Q}=0$, $\tilde{R}$, and $\tilde{S}$, and let $J'(t)$
    be a one-parameter family such that $Q'(t) = Q$, $R'(t) = R + t\tilde{R}$, and $S'(t) = S + t\tilde{S} + t^2 \tilde{R}^T Q^{-1} \tilde{R}$.
    By the Schur complement condition, we see that $\mrank\,J'(t) = d$ for all $t$.
    Also, $g_i(J'(t)) = 4-\mbk_i^T J'(t) \mbk_i = \langle \nabla g_i,\tilde{J} \rangle - t^2 \mbl^T \tilde{R}^T Q^{-1} \tilde{R} \mbl$
    is nonpositive for $t>0$ by the positive definiteness of $Q^{-1}$. Therefore, for some $\epsilon$, $J'(t)\in\mcR_0(4)$ for all $0\le t<\epsilon$.
    Since $\mathrm{obj}(J'(t)) = \mathrm{obj}(J)$ and $J$ is a local minimum of $\mathrm{obj}$ over $\mcR_0(4)$, so must $J'(t)$ for all sufficiently small $t$. Therefore $J$ is a fluid packing.
\end{proof}

We restricted the statement of Theorem~\ref{thm:deg} to the case $m=2$, where its analysis is more tractable thanks to the simplified form of the rank constraint (see Section~\ref{sec:rankconstraint}). However, we conjecture that it is also true for $m>2$, but the analysis of optimality conditions becomes more complicated. Theorems~\ref{thm:suff} and~\ref{thm:deg} suggest a straightforward generalization of Voronoi's algorithm to 2-periodic sets, which we elaborate in the next section.

\section{The generalized Voronoi algorithm}

\subsection{Outline}

Our algorithm seeks to enumerate all the locally optimal 2-periodic sets in $d$ dimensions. As we proved in Theorem \ref{thm:deg}, those are either fluid packings or algebraically extreme packings. The problem of enumerating the fluid packings is rather straightforward for $m=2$, since the two component lattices must themselves be extreme.


The enumeration of the algebraically extreme 2-periodic sets of packing radius $1$ consists of two steps that can be conceptually thought of as occurring one after the other, but in our implementation are actually interleaved. The first step is to enumerate all the vertices and edges of $\mcR(4)$ up to equivalence under the action of $\Gamma$. The second step is to take each edge, represented in the form $\{J+tJ':0\le t\le 1\}$ or $\{J+tJ':t\ge 0\}$, solve for all $t$ such $J+tJ'\in\mcR_0(4)$, and check whether $J+tJ'$ is algebraically extreme. We elaborate on these steps below.

\subsection{Enumerating connected components of fluid packings}
\label{sec:fluid}

As noted in Section~\ref{sec:density}, a 2-periodic fluid packing is the union of two translates of an extreme lattice. The relative translation must be a vector that is at a distance from the nearest lattice point of at least twice the packing radius. The local maxima of the distance from the nearest lattice point are called the \textit{holes} of a lattice, and are well-studied objects in the theory of lattices~\cite{conway1999sphere}. Namely, they are the circumcenters of the Delaunay cells of the lattice (when those lie inside the Delaunay cell). Every fluid packing is in the same connected component as the fluid packing obtained by taking the union of an extreme lattice and a translation of it by a hole of that lattice. The extreme lattices of a given dimension can be enumerated using Voronoi's algorithm, and their holes can be determined, for example, by finding the vertices of the Voronoi cell of the lattice~\cite{conway1999sphere}. This enumeration might be redundant if two holes of different lattice orbits are connected by a path whose distance to the lattice does not go below twice the packing radius, but it is guaranteed to give us at least one representative of every connected component of fluid packings.

\subsection{The shortest vector problem and related problems}

Throughout the algorithm, we need to solve a problem analogous to the problem known as the shortest vector problem (SVP) in the context of lattices~\cite{micciancio2010deterministic}. In our context, given a quadratic form $J:\mbR^{d+m-1}\to\mbR$ with positive definite upper-left $d\times d$ block $Q$, we wish to find its minimum over nonzero vectors in $\mathcal M = \mbZ^d\times(E-E)$. We also, in some cases, want to enumerated the vectors attaining this minimum, or more generally enumerate all vectors that attain a value below some threshold.

For $\mbk = (\mbn,\mbl)\in\mathcal M$ with fixed $\mbl$, $J(\mbk)$ is an inhomogeneous quadratic form of $\mbn$ (see Eq.~\eqref{eq:inhom}). Finding the minimum of an inhomogeneous quadratic form over the integer vectors is a problem known as the closest vector problem (CVP) for lattices and is closely related to the SVP~\cite{micciancio2010deterministic}. Since there are only finitely many possible values of $\mbl$, the problem reduces to a finite number of instances of the CVP problem for the underlying lattice~\cite{schurmann2009computational}.

As in the case of lattices, it might be expected that reduction of the quadratic form (using elements of $\Gamma$) would significantly decrease the average time to compute the SVP. To reduce the form, we first reduce the underlying lattice (the $Q$ block) by applying an operation $T$ of the form~\eqref{eq:Tblocks}, where $V=0$ and $W=1$. We then perform size reduction of the translation vectors, by applying the appropriate operation $T$, where $U=1$ and $W=1$, so that $Q^{-1}R^T$ has entries in $[-\tfrac12,\tfrac12]$. 

\subsection{Enumeration of vertices}

The algorithm to enumerate the vertices of $\mcR(4)$ is similar to the one accomplishing the analogous task in Voronoi's algorithm. We start with a known vertex of $\mcR(4)$, denoted $J_1$. We compute the extreme rays of its cone $C_{J_1}$~\cite{avis2017lrs}. For each such ray, $\{J_1+tJ':t\ge0\}$ there are two possibilities: either it lies entirely in $\mcR(4)$ or there is some $t>0$ such that $J_1+tJ'$ is another vertex of $\mcR(4)$, which we say is \textit{contiguous} to $J_1$. The first possibility does not exist in the original algorithm for lattices, but does occur in our case, and we must check for this possibility. We discuss this problem in Section~\ref{sec:rays}. If it is determined that the edge is not unbounded, we need to determine $t^* = \max\lbrace t: J_1+t J'\in\mcR(4)\rbrace$. This can be done by using bisection to narrow down the possible range of values for $t^*$ the point where we can identify the inequalities that become violated for $t > t^*$. We can then solve for $t^*$ by setting them to equalities. This would be a variation of the Algorithm $2$, $3.1.5$ from Ref.~\citep{schurmann2009computational}.
As $t^*$ is known to be rational, we find that in the low dimensional cases we tackle in this work, a practical and efficient method is to check whether $J_1+(a/b)J'$ is a vertex of $\mcR(4)$ for increasing natural values of $a$ and $b$. However, as the number of dimensions increases we expect this method to stop being practical, as it does in the case of the lattice Voronoi theory.

For each contiguous vertex, we check if it is equivalent to $J_1$ (more on this step in Section~\ref{sec:equiv}), and if not, we add it to a queue of vertices to be processed and to our partial enumeration of vertices. At each subsequent step of the algorithm, we remove a vertex from the queue, compute its contiguous vertices, check them for equivalence against all vertices in our partial enumeration, and add the ones that are not equivalent to previously enumerated vertices to the queue and to the partial enumeration. The enumeration is complete when the queue is empty.

As we mentioned in Section \ref{sec:symmetries}, we conjecture that $\mcR(4)$ has finitely many inequivalent vertices for all $d$, and therefore that the enumeration always halts. However, even if there are infinitely many vertex orbits, and the enumeration is performed breadth-first, then any orbit will eventually be enumerated by the algorithm.

The vertices of $\mcR(4)$ and the facets of their cones are all rational since they are determined by equations of the form $\mathbf{k}^T J\mathbf{k} = 4$ with integer coefficients. Therefore, all these calculations can be performed using exact arithmetic.

One way to obtain a starting vertex to initialize the algorithm is as follows: consider $A_d$ (or $D_d$, or any extreme, nonfluid $d$-dimensional lattice). It can be represented as 2-periodic set by taking a sublattice of index 2 instead of the primitive lattice. This 2-periodic set is necessarily algebraically extreme~\cite{schurmann2013strict}, and so it lies on an edge of $\mcR(4)$, which must terminate at a vertex in at least one direction.

\subsection{Detection of unbounded edges}
\label{sec:rays}

Given a vertex $J$ of $\mcR(4)$ and an extreme ray of $C_J$, of the form $\{J+t J':t\ge0\}$, we want to determine if the ray lies entirely in $\mcR(4)$. This is the case if and only if $J'(\mbk)\ge0$ for all $\mbk\in\mathcal M$. So, we have a problem very similar to the SVP above, except that we may not assume that $Q'$ is positive definite. If $Q'$ is not even positive semidefinite, then there is some $\mbk=(\mbn,0)$ such that $J'(\mbk) = Q'(\mbn) < 0$, and the ray is not unbounded. So the only remaining case is when $Q'$ is positive semidefinite and has nontrivial null space, $N(Q')$.

We break this remaining case into two cases. First, consider the case that there exist $\mbl\in(E-E)$ and $\mathbf{u}\in N(Q')$ with $c = \mathbf{u}^T (R')^T \mbl > 0$. Let $\mathbf{v}(t) = (-t c \mathbf{u}, \mbl)$, let $[\mathbf{v}](t)\in\mathcal M$ be the closest integer vector to $\mathbf{v}(t)$, and let $(\mathbf{e},0) = \mathbf{v}(t)-[\mathbf{v}](t)$ be the remainder. Then
\begin{equation}
J'([\mathbf{v}](t)) = t^2 c^2 \mathbf{u}^T Q' \mathbf{u} - t c \mathbf{u}^T Q' \mathbf{e}^T + \mathbf{e} Q' \mathbf{e} - 2t c^2 + \mbl^T S \mbl\text.
\end{equation}
All terms except $-2tc^2$ are either zero or bounded as $t\to\infty$ and so $J'([\mathbf{v}](t))<0$ for large enough $t$ and the ray is not unbounded.

The final case is that $\mathbf{u}^T (R')^T \mbl = 0$ for all $\mbl\in(E-E)$ and $\mathbf{u}\in N(Q')$. In this case, the problem reduces to the first case, where $Q'$ is positive definite, albeit in a smaller dimension: Since $Q'$ is a rational matrix, there is a unimodular transformation $U\in GL_d(\mbZ)$ such that $Q'\circ U$ has $\mathrm{span}\,\{\mathbf{e}_{r+1},\ldots,\mathbf{e}_d\}$ as its null space and is positive definite on $\mathrm{span}\,\{\mathbf{e}_{1},\ldots,\mathbf{e}_r\}$. We can find this transformation by computing the Hermite normal form or the Smith normal form of $Q'$ scaled to an integer matrix. Let $\mbn = \mbn' +\mbn''$ with $\mbn''\in N(Q'\circ U)$ and $\mbn'\in N(Q'\circ U)^\perp$. Then
\begin{equation}
J'(U(\mbn),\mbl) = (\mbn')^T U^T Q' U\mbn + 2 (\mbn')^T U^T (R')^T \mbl + 2 (\mbn'')^T U^T (R')^T \mbl + \mbl^T S' \mbl\text.
\end{equation}
Since, by assumption of this case, $(\mbn'')^T U^T (R')^T \mbl = 0$, the value of $J'(U(\mbn),\mbl)$ depends only on $\mbn'$ and $\mbl$, and we may find its minimum using our SVP method.

\subsection{Equivalence checking}
\label{sec:equiv}

Given two quadratic forms $J_1$ and $J_2$, we wish to determine if $J_2 = J_1\circ T = T^T J_1 T$ for some $T\in\Gamma$. Let us denote this relation as $J_1\sim J_2$. This problem may apply to vertices of $\mcR(4)$, as part of the algorithm for enumerating vertices, but we may also apply it to any pair of quadratic forms in $\mcR(4)$ that are not necessarily vertices. Let us first prove some useful results.

\begin{dfn}
    A set $M\subset\mathcal M$ is \textit{perfect} if $J(\mbk) = J'(\mbk)$ for all $\mbk\in M$ implies $J = J'$.
\end{dfn}

A set $M$ is perfect if and only if the set $\{\mbk\mbk^T: \mbk\in M\}$ spans the space of symmetric $(d+m-1)\times(d+m-1)$ matrices. Denote by $J^{-1}(A)\subset \mcM$ the set of vectors in $\mcM$ attaining values in the set $A$. If $A=\{4\}$ and $J$ is a vertex of $\mcR(4)$, then $J^{-1}(A)$ is perfect. A direct consequence of the definition of a perfect set is the following lemma:

\begin{lem}
    Let $A$ be a set of real values, and let $M_1 = J_1^{-1}(A)$ and $M_2 = J_2^{-1}(A)$ be the set of
    vectors in $\mathcal M$ that achieve these values. If $M_1$ and $M_2$ are perfect, then the following are equivalent:
    \begin{enumerate}
	\item $J_2 = J_1\circ T$ for some $T\in\Gamma$.
	\item $T(M_2) = M_1$, and $J_2|_{M_2} = (J_1\circ T)|_{M_2}$ for some $T\in\Gamma$.
    \end{enumerate}
    In particular a $T$ that satisfies one condition also satisfies the other.
\end{lem}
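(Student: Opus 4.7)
The plan is to verify both directions of the equivalence by unpacking the definitions; perfectness is needed only for the nontrivial direction, and nothing deeper than the group property of $\Gamma$ is used for the other.

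For $(1)\Rightarrow(2)$, suppose $J_2=J_1\circ T$ for some $T\in\Gamma$. If $\mbk\in M_2$ then $J_1(T\mbk)=J_2(\mbk)\in A$, so $T\mbk\in M_1$, giving $T(M_2)\subseteq M_1$. Since $\Gamma$ is a group, $T^{-1}\in\Gamma$, and applying the same reasoning to $J_1=J_2\circ T^{-1}$ yields $T^{-1}(M_1)\subseteq M_2$; combining these inclusions gives $T(M_2)=M_1$. The restriction equality $J_2|_{M_2}=(J_1\circ T)|_{M_2}$ is immediate from the hypothesis $J_2=J_1\circ T$, so $(2)$ holds with the same $T$.

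For $(2)\Rightarrow(1)$, the hypothesis says that the two quadratic forms $J_2$ and $J_1\circ T$ agree pointwise on the set $M_2$ (the condition $T(M_2)=M_1$ is only needed to make the second condition meaningful, i.e.\ to ensure $J_1\circ T$ takes values in $A$ on $M_2$, but what we actually use is the pointwise equality). Since $M_2$ is perfect, the defining property of perfectness, or equivalently the fact that $\{\mbk\mbk^T:\mbk\in M_2\}$ spans the space of symmetric $(d+m-1)\times(d+m-1)$ matrices, forces $J_2=J_1\circ T$ as quadratic forms on all of $\mbR^{d+m-1}$, which is precisely $(1)$ with the same $T$. The final clause of the lemma follows because the same $T$ witnesses both conditions in each implication.

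There is no real obstacle here: the only thing one has to be careful about is recognizing which of $M_1$, $M_2$ carries the inferential weight. Perfectness of $M_2$ is what drives $(2)\Rightarrow(1)$, while $(1)\Rightarrow(2)$ uses no perfectness hypothesis at all, only the group structure of $\Gamma$. Including perfectness of both sets in the statement is natural for symmetry (interchanging the roles of $J_1$ and $J_2$ via $T\mapsto T^{-1}$) and will be convenient for applications where one enumerates level sets without a preferred form.
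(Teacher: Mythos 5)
Your proof is correct and follows essentially the same route as the paper: the forward direction is a direct computation using the group structure of $\Gamma$ (show $T(M_2)\subseteq M_1$, then apply the same to $T^{-1}$), and the reverse direction is an immediate application of the definition of a perfect set to $M_2$. Your side remark that perfectness of $M_1$ is not logically needed and that $T(M_2)=M_1$ plays no role in $(2)\Rightarrow(1)$ is a correct and mildly sharper observation than the paper makes explicit, but it does not change the argument.
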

\begin{proof}
     First, suppose $J_2 = J_1\circ T$, and let $\mbk\in M_2$. Since $J_2(\mbk) = J_1(T\mbk)\in A$, we have that
     $T\mbk\in M_1$. Similarly, if $\mbk\in M_1$, then $T^{-1}\mbk\in M_2$. So $T(M_2) = M_1$, and clearly 
     $J_2|_{M_2} = (J_1\circ T)_{M_2}$ follows \textit{a fortiori} from the unrestricted equality.
 
     The other direction follows immediately from the definition of a perfect set.
\end{proof}

Therefore, a simple algorithm to check for equivalence is as follows: first, construct a set $A$ such that $M_1 = J_1^{-1}(A)$ is perfect. When $J_1$ is a vertex of $\mcR(4)$, the set $A=\{4\}$ suffices. Otherwise, we find the smallest $a>4$ such that $A=[4,a]$ suffices. Next, compute $M_2 = J_2^{-1}(A)$. If $M_2$ is not perfect, $|M_1|\neq|M_2|$, or $J_2$ does not take values in $A$ over $M_2$ with the same frequency as $J_1$ does over $M_1$, then $J_1\not\sim J_2$. We give labels $1,\ldots,s$ to the elements of $M_1=\{\mbk_1,\ldots,\mbk_s\}$ and $M_2=\{\mbk'_1,\ldots,\mbk'_s\}$, such that $\mbk_1,\ldots,\mbk_{d+m-1}$ are linearly independent (there must be such a linearly independent subset for $M_1$ to be perfect). We now try to construct an injective map $\sigma:\{1,\ldots,d+m-1\}\to\{1,\ldots,s\}$ such that
\begin{equation}
    \label{eq:permut-equiv}
    (\mbk'_{\sigma(i)})^T J_2\mbk'_{\sigma(j)} = \mbk_i^T T^T J_1 T \mbk_j\text{ for all }1\le i,j\le d+m-1\text.
\end{equation}
We can do this by a backtracking search, constructing $\sigma$ on $1,\ldots,n<d+m-1$ for increasing $n$, and backtracking when no possible assignment of $\sigma(n+1)$ satisfies \eqref{eq:permut-equiv} for $1\le i,j\le n+1$. For each such complete map produced by the backtracking search, there is a unique linear map $T'$ such that $T'\mbk_i = \mbk'_{\sigma(i)}$ for $1\le i\le d+m-1$. If $T'\in\Gamma$ and $T'(M_1) = M_2$, we are done and $J_1\sim J_2$. Otherwise, we continue with the backtracking search. If the backtracking search concludes without finding any equivalence, then $J_1\not\sim J_2$.

\subsection{Enumeration of algebraically extreme forms}

Given an edge of $\mcR(4)$ of the form $J+tJ'$, where $0\le t\le1$ or $t\ge0$, we wish to identify the points of this edge that lie in $\mcR_0(4)$. As we are limiting ourselves to the case $m=2$, we simply need to solve $\det(J+tJ') = 0$ for $t$, which is a univariate polynomial equation. This equation can be solved for $t$ as a generalized eigenvalue problem $J\mathbf{x} = -t J^\prime\mathbf{x}$. It is possible that the entire edge lies in $\mcR_0(4)$, as happens for one edge in our enumeration for $d=5$. In that case, either $Q$ is constant over the edge, and any locally optimal packing on the edge is necessarily a fluid packing, or $Q$ is not constant, and therefore neither is $\mathrm{obj}$. So, only points that minimize $\mathrm{obj}$ over the edge --- necessarily endpoints by quasiconcavity --- can be locally optimal packings.

For each possible candidate configuration (each root of $\det(J+t J')$ if it is not identically zero, or the endpoints if it is), we test to see if it is algebraically extreme. If $J$ is algebraically extreme, we can certify that it is by considering the dual problem: $J$ is algebraically extreme if and only if the cone $\{\sum_{\mbk\in J^{-1}(4)} \eta_{\mbk} \mbk\mbk^T + \mathbf{x}\mathbf{x}^T : \eta_\mbk>0, \mathbf{x}\in N(J)\}$ is full-dimensional and includes $G_J$. If $J$ is not algebraically extreme, we can certify that by the direct problem of finding $J'$ that gives a counterexample for the definition: $J'\neq J$, $\langle J'-J,G_J\rangle\le0$, and $J'\in C_J\cap T_J$. Since $t$ comes from the root of a univariate polynomial, it is not, in general, rational.

In our calculation, we used floating point representation for the candidate algebraically extreme configurations and for the implementation of the test for whether they are algebraically extreme.

\section{Numerical results}
\label{sec:nr}

We use an implementation of the algorithm described in the previous section to fully enumerate the vertices of the Ryshkov-like polyhedron $\mcR(4)$ and the algebraically extreme $m$-periodic packings for $m=2$ and $d=3,4,5$. The results of the enumeration are summarized in Table \ref{tab:summary}. The extreme lattices in these dimensions are the classical root lattices $A_d$, $D_d$, and $E_5$, whose covering radii are known~\cite{conway1999sphere}, and do not exceed twice the packing radius. Since the existence of a 2-periodic fluid packing requires the existence of an extreme lattice with a hole that is at a distance of more than twice the packing radius from the nearest lattice point (see Section \ref{sec:fluid}), there are no fluid packings for $d=3,4,5$. Our enumeration of the algebraically extreme 2-periodic algebraically extreme packings exhausts all locally optimal 2-periodic packings.

Our attempted enumeration for $d=6$ did not terminate after over a month of running. The main bottleneck appears to be the enumeration of extreme rays of $C_J$ for vertices $J$ with large number of minimal vectors $J^{-1}(4)$ (see Sec.~\ref{sec:nr:d6}). This is similar to the main bottleneck in the enumeration of perfect lattices in $d=8$, where only by exploiting the symmetries of these high-kissing-number lattices, was the full enumeration made tractable~\cite{sikiric2007classification}. We are hopeful that a similar approach could be used for $m=2$ to make full enumeration in higher dimensions than $d=5$ tractable, but we do not attempt to implement it in this work.

We use a mix of floating-point arithmetic and exact arithmetic in various parts of the algorithm. Therefore, we do not claim the results in this section as rigorous results, but rather as numerical results. We relied on floating-point arithmetic for the shortest vector problem, giving sufficient tolerance so that no candidate shortest vectors are rejected and then using exact arithmetic to verify that they are in fact all of the same length. Determining the rays of vertex cones and finding contiguous vertices along those rays was done with exact rational arithmetic. The final step of finding candidate optima along the edges of the polyhedron and checking whether these candidates are algebraically extreme was done using floating-point arithmetic. After obtaining the set of algebraic extreme packings, we were able to use a computer algebra system to obtain exact algebraic expressions for their coordinates.

\begin{table}
    \begin{tabular}{lccc}
	vertices of $\mcR(4)$ & 4 (2) & 10 (6) & 34 (25)\\
	algebraically extreme 2-periodic sets & 3 (1) & 7 (3) & 29 (20) \\
	highest density & $1/(2\sqrt{2})$ & $1/8$ & $1/(8\sqrt{2})$ \\
	multiplicity of highest density & 3 (1) & 2 (0) & 5 (2)
    \end{tabular}
    \caption{Summary of enumeration results. For $d = 3, 4, 5$ we list the number of vertices of the Ryshkov-like polyhedron, the number of algebraically extreme 2-periodic sets, the highest number density achieved for packing radius $1$, and the number of algebraically extreme sets that achieve this density. In parentheses, we indicate how many of the corresponding 2-periodic sets are not also lattices.}
    \label{tab:summary}
\end{table}

\subsection{d=3}

\begin{figure}
    \includegraphics[width=0.32\textwidth]{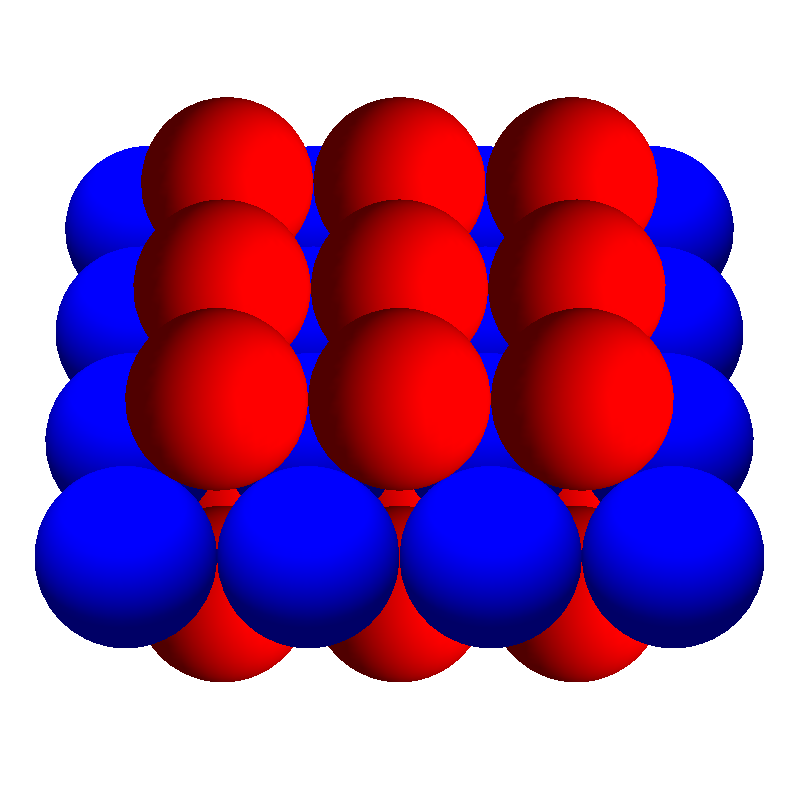}
    \includegraphics[width=0.32\textwidth]{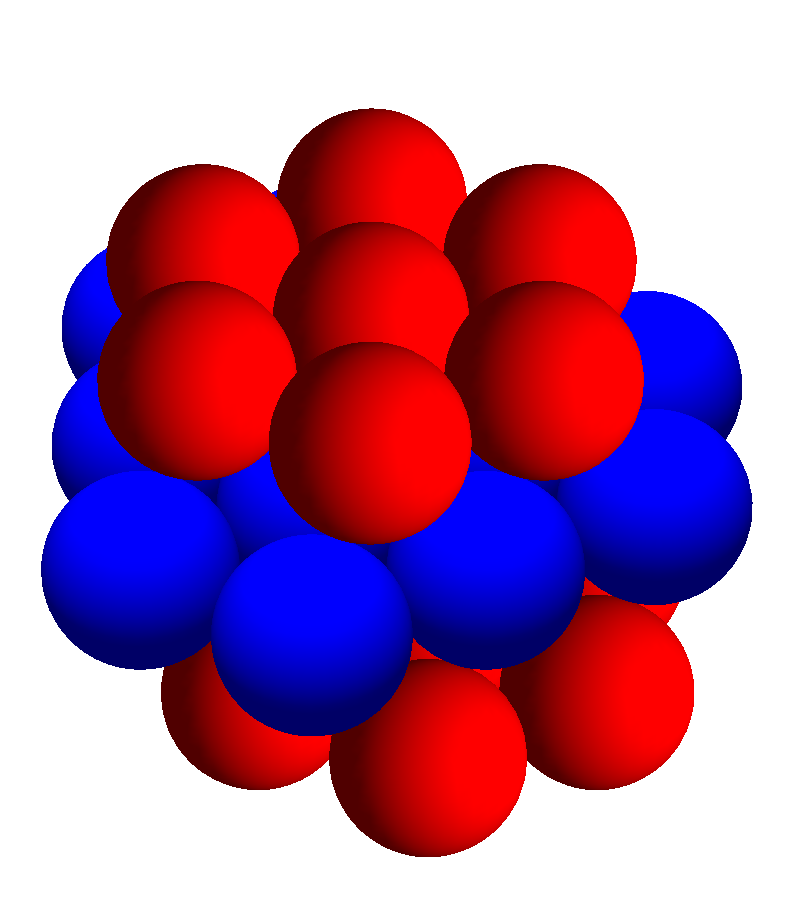}
    \includegraphics[width=0.32\textwidth]{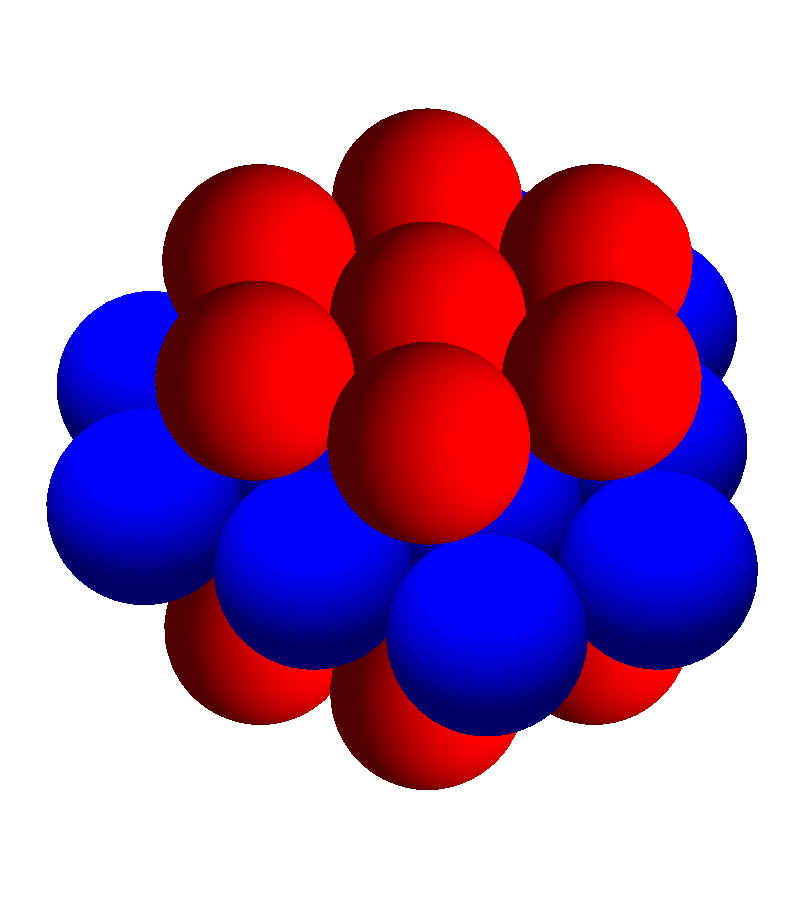}
    \caption{The three algebraically extreme 2-periodic packings in 3 dimensions. We use red and blue spheres to represent the two orbits under translation by lattice vectors. Both the left and middle packings are representations of the fcc lattice as a 2-periodic set, but they are not equivalent when the two orbits are distinguished. The right packing is the hexagonal close-packed 2-periodic arrangement and is the only algebraically extreme 2-periodic packing in 3 dimensions that is not also a lattice.}
    \label{fig:3dpacks}
\end{figure}

The enumeration in $d=3$ gave 4 inequivalent vertices and 3 inequivalent algebraically extreme 2-periodic sets. All the algebraically extreme 2-periodic sets in 3 dimensions of packing radius $1$ have the same density $\delta = 1/(2\sqrt{2})\approx0.354$, and they are represented by the following quadratic forms:
\begin{equation}
    J_{3,1} = 2\left(\begin{array}{cccc}2 & 0 & 0 & -1\\ 0 & 2 & 0 & 1\\ 0 & 0 & 4 & 2\\ -1 & 1 & 2 & 2\end{array}\right)\quad
    J_{3,2} = 2\left(\begin{array}{cccc}2 & -1 & 1 & 0\\ -1 & 2 & 0 & 1\\ 1 & 0 & 6 & 3\\ 0 & 1 & 3 & 2\end{array}\right)
\end{equation}
\begin{equation}
    J_{3,3} = \frac{2}{3}\left(\begin{array}{cccc}6 & 3 & 0 & 0\\ 3 & 6 & 0 & 3\\ 0 & 0 & 16 & 8\\ 0 & 3 & 8 & 6\end{array}\right)
\end{equation}
The forms $J_{3,1}$ and $J_{3,2}$ are two inequivalent representations of the fcc lattice as a 2-periodic set. $J_{3,1}$ is a stacking of square layers, whereas $J_{3,2}$ is a stacking of triangular layers (see Figure~\ref{fig:3dpacks}). The forms are not equivalent under the action of $\Gamma$ because the corresponding two sublattices of the fcc lattices are not equivalent under the symmetries of the fcc lattice. The form $J_{3,3}$ represents the hexagonal-close-packing 2-periodic arrangement, which is not a lattice. Note that a form $J$ is the representation of a lattice as a 2-periodic set if and only if $Q^{-1}R^T\in(\tfrac12\mbZ)^d$.

All the algebraically extreme 2-periodic sets have the same kissing number, 12, but as quadratic forms they have different number of minimal vectors: $|J_{3,1}^{-1}(4)|=20$, $|J_{3,2}^{-1}(4)|=|J_{3,3}^{-1}(4)|=18$. This difference occurs because contacts between spheres in the same orbit contribute the same minimal vector as the analogous contact in a different orbit. Denote by $|\cdot|_*$ a counting measure that gives weight $m$ to vectors of the form $\mbk=(\mbn,0)$. Then the average kissing number is $\kappa = |J^{-1}(\mathrm{min}\, J)|_*/m$. 

\begin{figure}
    \includegraphics[width=0.24\textwidth]{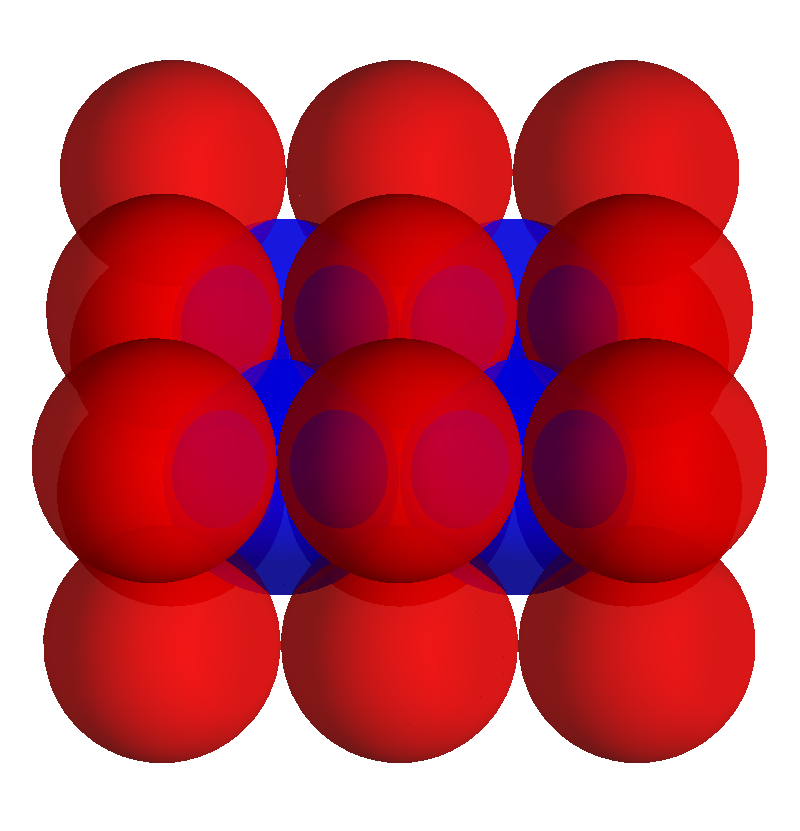}
    \includegraphics[width=0.24\textwidth]{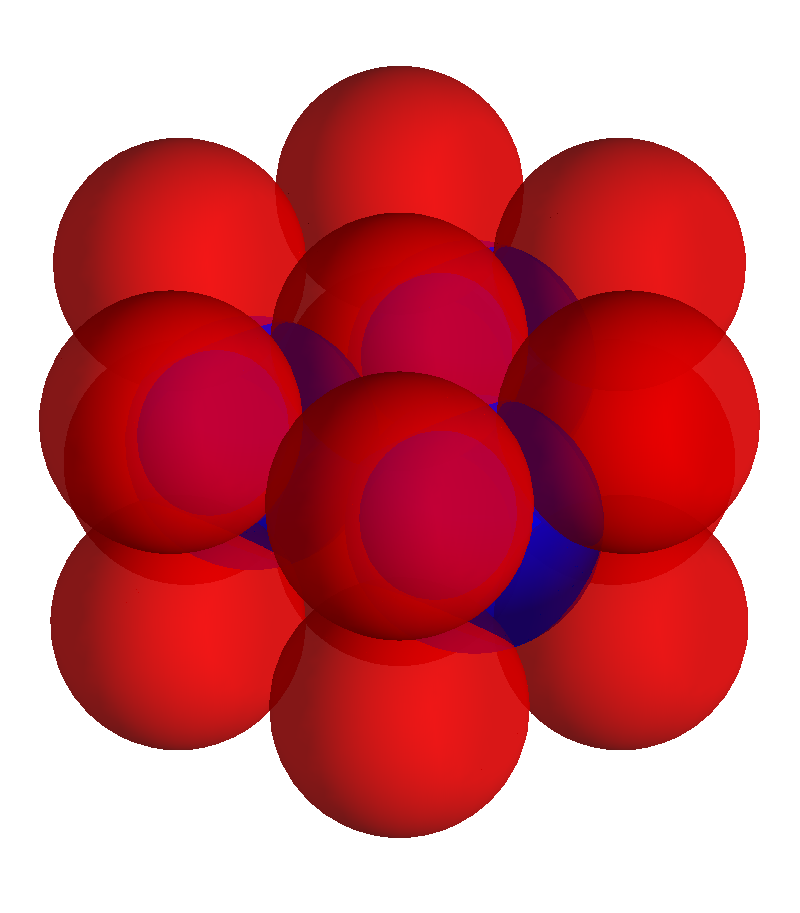}
    \includegraphics[width=0.24\textwidth]{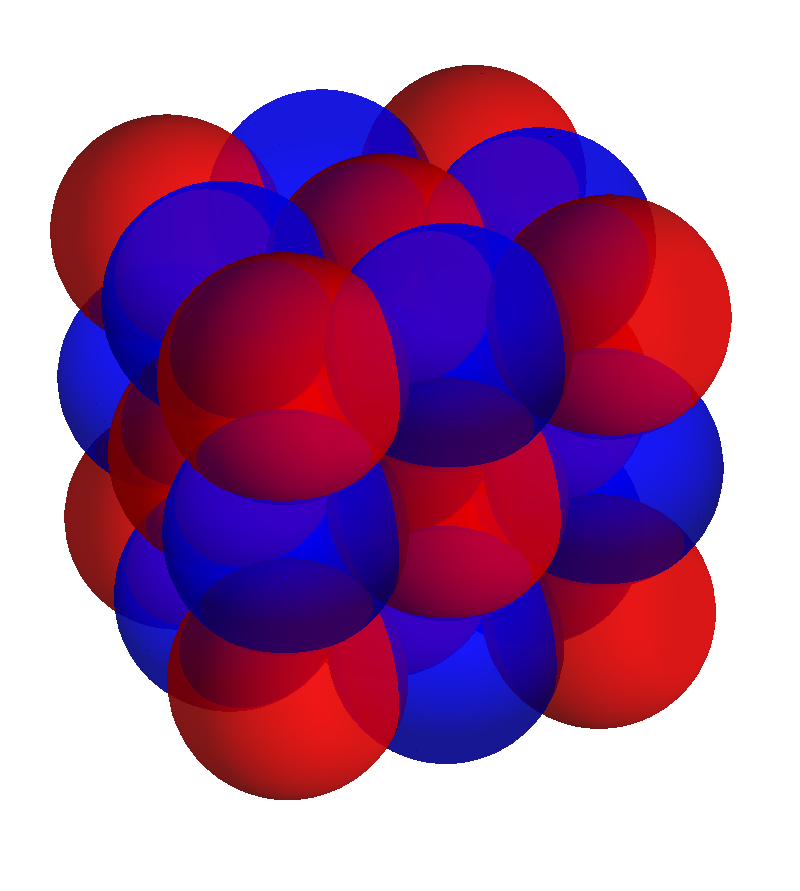}
    \includegraphics[width=0.24\textwidth]{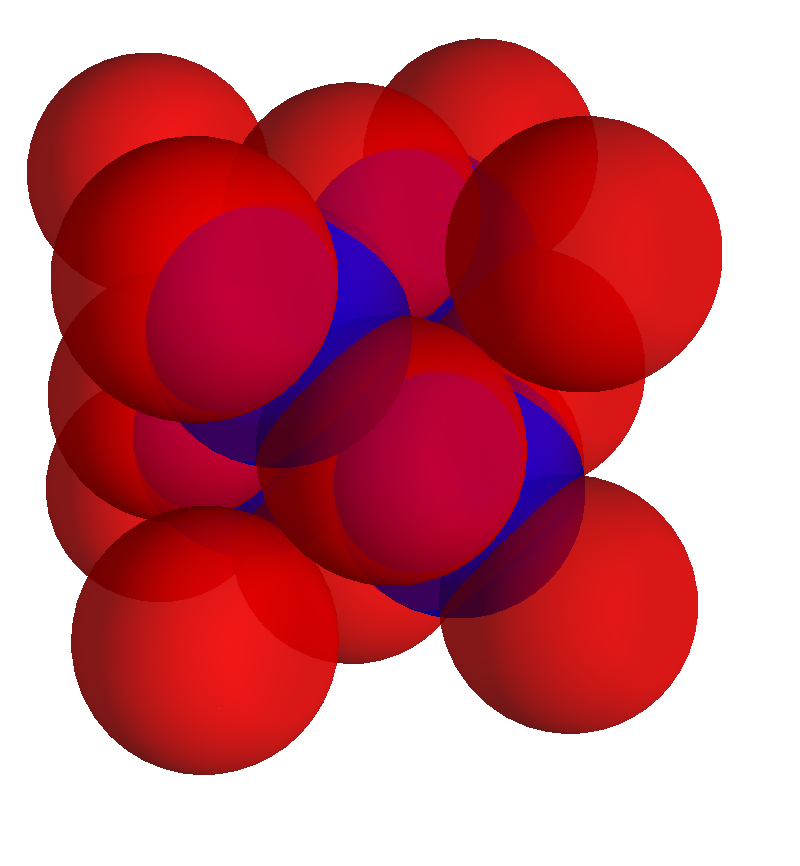}
    \caption{The vertices of the Ryshkov-like polyhedron for $d=3$ and $m=2$ can be interpreted as binary
    packings of nonadditive hard spheres.}
    \label{fig:nonadd}
\end{figure}

The vertices of the Ryshkov-like polyhedron are also interesting to consider. They are
\begin{equation}
    J_{3,1v} = 2\left(\begin{array}{cccc}2 & 0 & 0 & 1\\ 0 & 2 & 0 & -1\\ 0 & 0 & 2 & 1\\ 1 & -1 & 1 & 2\end{array}\right)\quad
    J_{3,2v} = 2\left(\begin{array}{cccc}2 & 1 & 0 & 0\\ 1 & 2 & 0 & 1\\ 0 & 0 & 2 & 1\\ 0 & 1 & 1 & 2\end{array}\right)
\end{equation}
\begin{equation}
    J_{3,3v} = 2\left(\begin{array}{cccc}2 & 1 & -1 & 0\\ 1 & 2 & 0 & 1\\ -1 & 0 & 2 & 1\\ 0 & 1 & 1 & 2\end{array}\right)\quad
    J_{3,4v} = 2\left(\begin{array}{cccc}2 & -1 & -1 & 0\\ -1 & 2 & 1 & 1\\ -1 & 1 & 2 & 0\\ 0 & 1 & 0 & 2\end{array}\right)\text.
\end{equation}
They have full rank, so they do not correspond to 2-periodic sets in 3 dimensions. However, because they are positive definite, they correspond to 4-dimensional sets that are periodic (with two orbits) under a 3-dimensional lattice. When they are projected to the space spanned by the lattice, they can be interpreted as binary packings of non-additive spheres, where the two sphere species have equal self-radius, but smaller radius when interacting with each other. The first is a simple cubic lattice of one species with its body-center holes filled with spheres of the other species. The second is a hexagonal lattice with one of the two inequivalent triangular prism-shaped holes in each unit cell filled. The third and fourth are the fcc lattice with its octahedral or (one of its) tetrahedral holes filled, giving, respectively, a simple cubic lattice with alternating species (the NaCl crystal structure) and the diamond crystal structure (see Figure~\ref{fig:nonadd}).

\begin{figure}
    \centering
    \includegraphics[scale=0.6]{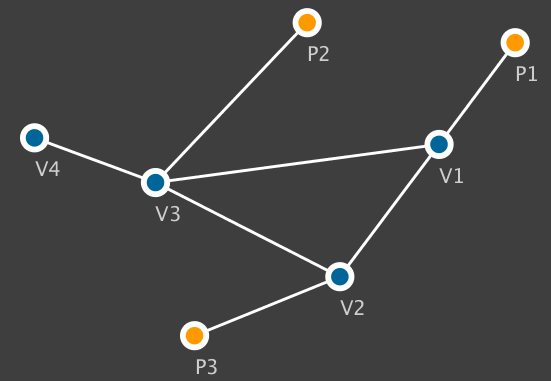}
    \caption{The Voronoi graph of 2-periodic sets in $d=3$, a generalization of the Voronoi graph for lattices. 	The blue nodes are the vertices of $\mcR(4)$, the Ryshkov-like polyhedron; the orange nodes are points of $\mcR_0(4)$ lying on unbounded edges (rays) of $\mcR(4)$. Two vertices are connected in the graph if they are contiguous, and edge points are connected to the vertices on which their edge is incident. The labels $Vn$ and $Pn$ corresponds to $J_{3,nv}$ and $J_{3,n}$ respectively. In $d=3$ all the points of $\mcR_0(4)$ lying on polyhedron edges are on unbounded edges and they are all algebraically extreme.}
    \label{fig:d3-graph}
\end{figure}

Finally, we point out as an example of an unbounded edge, the edge connecting $J_{3,3}$ and $J_{3,2v}$.

\subsection{d=4}

In 4 dimensions, we obtain 7 inequivalent algebraically extreme 2-periodic sets. Six of those lie in the relative interior of edges of the Ryshkov-like polyhedron, and one is a vertex. Two algebraically extreme sets achieve the maximum density, $\delta = 1/8 = 0.125$, and both are representations of the lattice $D_4$ as a 2-periodic set. They are represented by the forms,
\begin{equation}
    J_{4,1} = 2\left(\begin{array}{ccccc}2 & 0 & 0 & 0 & 1\\ 0 & 2 & 0 & 0 & -1\\ 0 & 0 & 2 & 0 & -1\\ 0 & 0 & 0 & 2 & -1\\ 1 & -1 & -1 & -1 & 2\end{array}\right)\quad
    J_{4,2} = 2\left(\begin{array}{ccccc}2 & -1 & -1 & 0 & -1\\ -1 & 2 & 1 & 0 & 1\\ -1 & 1 & 2 & 0 & 0\\ 0 & 0 & 0 & 4 & 2\\ -1 & 1 & 0 & 2 & 2\end{array}\right)\text.
\end{equation}
The form $J_{4,1}$ is a vertex of the Ryshkov-like polyhedron. Both have the kissing number of $D_4$, $\kappa=24$, as the average kissing number. The next-highest density achieved is $\delta=2/\sqrt{144+64\sqrt{5}}\approx0.118$, which is achieved by a single algebraically extreme 2-periodic set:
\begin{equation}
    J_{4,3} = 2\left(\begin{array}{ccccc}2 & -1 & -1 & -1 & -1\\ -1 & 2 & 1 & 1 & 0\\ -1 & 1 & 2 & 1 & 1\\ -1 & 1 & 1 & 2+2\tau & -\tau\\ -1 & 0 & 1 & -\tau & 2\end{array}\right)\text,
\end{equation}
where $\tau = (1+\sqrt{5})/2$ is the golden ratio. Its average kissing number is $\kappa=22$. Finally, there are four algebraically extreme 2-periodic sets that achieve the same density as the $A_4$ lattice, $\delta=1/(4\sqrt{5})\approx0.112$. Two of them are in fact representations of the $A_4$ lattice, but the other two are not also lattices. All four have the same kissing number as the lattice, $\kappa=20$. Including $J_{4,1}$, there are 10 inequivalent vertices of the Ryshkov-like polyhedron.

\begin{figure}
    \centering
    \includegraphics[scale=0.6]{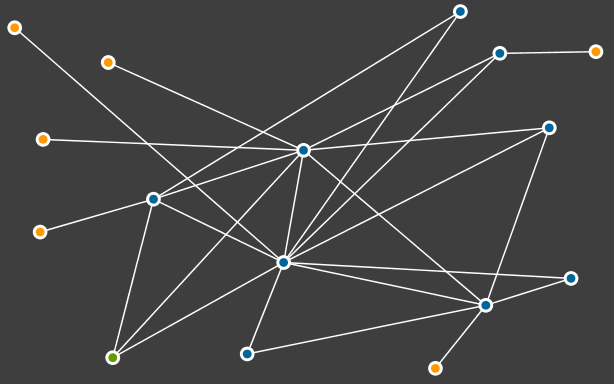}
    \caption{The Voronoi graph in $d=4$. The blue nodes are polyhedron vertices that are not in $\mcR_0(4)$. The green and orange nodes are points of $\mcR_0(4)$ lying on vertices and unbounded edges of the polyhedron, respectively. In $d=4$, as in $d=3$, all the points of $\mcR_0(4)$ in the relative interior of polyhedron edges are on unbounded edges and are all algebraically extreme. We omit the labels for neatness.}
    \label{fig:d4-graph}
\end{figure}

\subsection{d=5}

In 5 dimensions, we obtain 29 inequivalent algebraically extreme 2-periodic sets (see Table \ref{tab:d5ext} for a tabulation of their invariants). Five inequivalent 2-periodic sets achieve the maximum density $\delta = 1/(8\sqrt{2})\approx0.0884$, three of which are representations of the lattice $D_5$ as a 2-periodic set, and two are nonlattices that achieve the same density. The ones that represent the lattice $D_5$ are given by the forms,
\begin{equation}
    J_{5,1} = 2\left(\begin{array}{cccccc}2 & -1 & -1 & 0 & 0 & 1\\ -1 & 2 & 1 & 0 & 0 & 0\\ -1 & 1 & 2 & 0 & 0 & -1\\ 0 & 0 & 0 & 2 & 0 & -1\\ 0 & 0 & 0 & 0 & 2 & -1\\ 1 & 0 & -1 & -1 & -1 & 2\end{array}\right)
\end{equation}
\begin{equation}
    J_{5,2} = 2\left(\begin{array}{cccccc}2 & -1 & -1 & -1 & 0 & -1\\ -1 & 2 & 1 & 1 & 0 & 1\\ -1 & 1 & 2 & 0 & 0 & 0\\ -1 & 1 & 0 & 2 & 0 & 1\\ 0 & 0 & 0 & 0 & 4 & 2\\ -1 & 1 & 0 & 1 & 2 & 2\end{array}\right)
\end{equation}
\begin{equation}
    J_{5,3} = 2\left(\begin{array}{cccccc}2 & -1 & -1 & -1 & -1 & -1\\ -1 & 2 & 1 & 1 & 1 & 1\\ -1 & 1 & 2 & 1 & 1 & 0\\ -1 & 1 & 1 & 2 & 1 & 1\\ -1 & 1 & 1 & 1 & 4 & -1\\ -1 & 1 & 0 & 1 & -1 & 2\end{array}\right)\text,
\end{equation}
where $J_{5,1}$ is also a vertex of the Ryshkov-like polyhedron (the only vertex that is also in $\mcR_0$). The two that are not lattices but achieve the same density are
\begin{equation}
    J_{5,4} = \left(\begin{array}{cccccc}4 & -2 & -2 & 2 & -2 & -2\\ -2 & 4 & 0 & -2 & 2 & 0\\ -2 & 0 & 4 & -2 & 2 & 2\\ 2 & -2 & -2 & 4 & -2 & -2\\ -2 & 2 & 2 & -2 & 10 & 5\\ -2 & 0 & 2 & -2 & 5 & 4\end{array}\right)
\end{equation}
\begin{equation}
    J_{5,5} = \frac{2}{5}\left(\begin{array}{cccccc}10 & 5 & 5 & -5 & 0 & 0\\ 5 & 10 & 5 & -5 & 0 & 5\\ 5 & 5 & 10 & -5 & 0 & 5\\ -5 & -5 & -5 & 10 & 0 & -5\\ 0 & 0 & 0 & 0 & 16 & -8\\ 0 & 5 & 5 & -5 & -8 & 10\end{array}\right)\text.
\end{equation}
All have the same kissing number $\kappa=40$.

The next highest density, $\delta = (\sqrt{5 - 2 \sqrt{6}})/4 \approx 0.0795$, is achieved by three algebraically extreme 2-periodic sets, all of which are not also lattices:
\begin{equation}
    J_{5,6} = \frac{2}{5}\left(\begin{array}{cccccc}10 & -5 & -5 & 5 & -5 & -5\\ -5 & 10 & 5 & -5 & 5 & 0\\ -5 & 5 & 10 & -5 & 0 & 0\\ 5 & -5 & -5 & 10 & -5 & -5\\ -5 & 5 & 0 & -5 & 16 + 4  \sqrt{6}  & 8 + 2  \sqrt{6} \\ -5 & 0 & 0 & -5 & 8 + 2  \sqrt{6}  & 10\end{array}\right)\\
\end{equation}
\begin{equation}
    J_{5,7} = \frac{2}{5}\left(\begin{array}{cccccc}10 & 5 & -5 & -5 & -5 & 10\\ 5 & 10 & -5 & -5 & 0 & 10\\ -5 & -5 & 10 & 5 & 0 & -5\\ -5 & -5 & 5 & 10 & 0 & -5\\ -5 & 0 & 0 & 0 & 14 + 4  \sqrt{6}  & 2 + 2  \sqrt{6} \\ 10 & 10 & -5 & -5 & 2 + 2  \sqrt{6}  & 20\end{array}\right)\\
\end{equation}
\begin{equation}
    J_{5,8} = \frac{1}{2}\left(\begin{array}{cccccc}8 & 4 & 4 & 0 & -4 & 0\\ 4 & 8 & 4 & 0 & -4 & 4\\ 4 & 4 & 8 & 0 & -4 & 4\\ 0 & 0 & 0 & 8 & 0 & -4\\ -4 & -4 & -4 & 0 &  8 + 2  \sqrt{6}  & -4 -  \sqrt{6} \\ 0 & 4 & 4 & -4 & -4 - \sqrt{6}  & 8\end{array}\right)\text.
\end{equation}
The first two of these have a kissing number of $\kappa=35$, and the third has $\kappa = 34$. The third largest density is that achieved by one of the three extreme lattices in 5 dimensions, $\delta = 1/(9\sqrt{2})\approx0.0786$, and is achieved by five 2-periodic sets, three of which are representations of the lattice, and all have the same kissing number as the lattice. The third extreme lattice, $A_5$, has the lowest density of all extreme lattices, $\delta = 1/(8\sqrt{3})\approx0.722$, and this density is achieved by seven 2-periodic sets, three of which represent the lattice, and all having the same kissing number as the lattice. This is also the lowest density achieve by any algebraically extreme 2-periodic set, although there is a form of $\mcR_0(4)$ on an edge of $\mcR(4)$ that has lower density, but fails to be algebraically extreme.

Including $J_{5,1}$, the Ryshkov-like polyhedron has 34 vertices. Two of the vertices are not positive semidefinite, and this is the lowest dimension where such vertices occur. When interpreted as nonadditive binary sphere packings, these packings would have a nonself-radius larger than the self-radius. In this sense they are similar to the fluid packings (the distance between orbits is larger than the distance within each orbit), but the underlying lattices are not extreme. Another interesting phenomenon that first occurs in 5 dimensions is an edge of $\mcR(4)$ that is completely contained in $\mcR_0(4)$. Such an edge is the one connecting $J_{5,1}$ to $J_{5,1}\circ T$, where
\begin{equation}
    T = \left(\begin{array}{cccccc}1 & 0 & -1 & 0 & -1 & 1\\ 0 & 1 & 0 & 0 & 0 & 0\\ 0 & 0 & 0 & 0 & -1 & 1\\ 0 & 0 & 0 & 1 & 0 & 0\\ 0 & 0 & -1 & 0 & 0 & 1\\ 0 & 0 & 0 & 0 & 0 & 1\end{array}\right)\text,
\end{equation}
as well as all the equivalent edges. Along the edge $J_{5,1}+t(J_{5,1}\circ T-J_{5,1})$, the objective determinant is $\mathrm{obj}(t)=512(1 + t - t^2)$, so any internal point cannot be a locally optimal 2-periodic set.

\begin{table}
    \scalebox{0.9}{
    \begin{tabular}{ccccc}
	\# & $\delta$ & $\delta$ & L/N & $\kappa$\\
	1 & 0.0884 & $1/(8 \sqrt{2})$ & L & 40 \\
2 & 0.0884 & $1/(8 \sqrt{2})$ & L & 40 \\
3 & 0.0884 & $1/(8 \sqrt{2})$ & L & 40 \\
4 & 0.0884 & $1/(8 \sqrt{2})$ & N & 40 \\
5 & 0.0884 & $1/(8 \sqrt{2})$ & N & 40 \\
6 & 0.0795 & $\sqrt{5 - 2 \sqrt{6}}/4$ & N & 35 \\
7 & 0.0795 & $\sqrt{5 - 2 \sqrt{6}}/4$ & N & 35 \\
8 & 0.0795 & $\sqrt{5 - 2 \sqrt{6}}/4$ & N & 34 \\
9 & 0.0786 & $1/(9 \sqrt{2})$ & L & 30 \\
10 & 0.0786 & $1/(9 \sqrt{2})$ & L & 30 \\
11 & 0.0786 & $1/(9 \sqrt{2})$ & L & 30 \\
12 & 0.0786 & $1/(9 \sqrt{2})$ & N & 30 \\
13 & 0.0786 & $1/(9 \sqrt{2})$ & N & 30 \\
14 & 0.0771 & $\sqrt{419 + 1011 \sqrt{33}}/1024$ & N & 30 \\
15 & 0.0765 & $-\sqrt{2} + (2 \sqrt{5})/3$ & N & 29 \\
16 & 0.0765 & $-\sqrt{2} + (2 \sqrt{5})/3$ & N & 29 \\
17 & 0.0758 & $3/(2 \sqrt{126 + 4 \sqrt{19} \cos {\alpha_1} - 266 \cos {2 \alpha_1}})$ & N & 30 \\
18 & 0.0750 & $3/40$ & N & 30 \\
19 & 0.0748 & $1/(36 \sqrt{-58 + 26 \sqrt{5}})$ & N & 26 \\
20 & 0.0748 & $(3 \sqrt{3})/((\sqrt{(3 - 4 \cos {\alpha_2} - 8 \cos {2 \alpha_2})})(4 (5 + 4 \cos {\alpha_2})))$ & N & 28 \\
21 & 0.0738 & $59049/(4 \sqrt{x_3})$ & N & 25 \\
22 & 0.0737 & $5/(48 \sqrt{2})$ & N & 30 \\
23 & 0.0722 & $1/(8 \sqrt{3})$ & L & 30 \\
24 & 0.0722 & $1/(8 \sqrt{3})$ & L & 30 \\
25 & 0.0722 & $1/(8 \sqrt{3})$ & L & 30 \\
26 & 0.0722 & $1/(8 \sqrt{3})$ & N & 30 \\
27 & 0.0722 & $1/(8 \sqrt{3})$ & N & 30 \\
28 & 0.0722 & $1/(8 \sqrt{3})$ & N & 30 \\
29 & 0.0722 & $1/(8 \sqrt{3})$ & N & 30

    \end{tabular}}
    \caption{Some invariants of the 29 algebraically extreme 2-periodic sets in $d=5$ dimensions, including density and kissing number. The \textit{L/N} column indicates whether this is the representation of a lattice as a 2-periodic set (\textit{L}) or a truly nonlattice arrangement (\textit{N}). Here 
	$\alpha_1 = \tfrac13(\tan^{-1}(3\sqrt{762})-\pi)$,
	$\alpha_2 = \tfrac13(\tan^{-1}(9\sqrt{47}/17)-\pi)$,
	$\alpha_3 = \tfrac13(\tan^{-1}(486\sqrt{1077}/5867)-\pi)$, and
$x_3=8189832078 + 22432320 \sqrt{661} \cos {\alpha_3} - 35353733064 \cos {2 \alpha_3} - 3050582422 \cos {4 \alpha_3} + 6990736 \sqrt{661} \cos {5\alpha_3}$.}
    \label{tab:d5ext}
\end{table}

\begin{figure}
    \centering
    \includegraphics[scale=0.5]{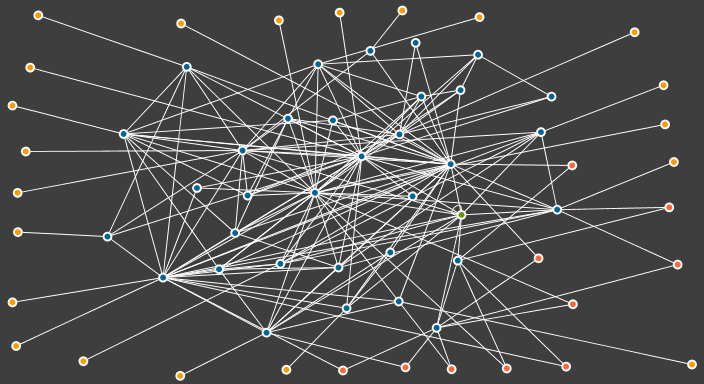}
    \caption{The Voronoi graph in $d=5$. The notation is the same as in Figs.~\ref{fig:d3-graph} and~\ref{fig:d4-graph}. This is the lowest dimension where a point of $\mcR_0(4)$ lying in the relative interior of an edge connecting two vertices appears (red nodes). This is also the lowest dimension where an entire edge of the polyhedron is in $\mcR_0(4)$, but since only endpoints of such edges can be locally optimal, we do not include such edge points in the graph. We omit the node labels for neatness.}
    \label{fig:d5-graph}
\end{figure}

\subsection{d=6}
\label{sec:nr:d6}

In $d=6$ we were only able to perform partial enumeration starting from a vertex incident on the edge on which a representation of $A_6$ lies. The partial enumeration discovered $730$ vertices of $\mcR(4)$ and $692$ points of $\mcR_0(4)$ on edges of $\mcR(4)$ (including 8 vertices). There are also $4$ edges connecting $3$ vertices lying completely in $\mcR_0(4)$. The obstacles that stalled the full enumeration are three vertices with particularly complex cones. One is a representation of $E_6$ as a 2-periodic set (with $|J^{-1}(4)|=124$) and two vertices with $|J^{-1}(4)|=112$ and $|J^{-1}(4)|=126$ minimal vectors:
\begin{gather}
    J_{6,E_6} = 2\left(
        \begin{array}{ccccccc}
            6 & -2 & 0 & 0 & 0 & 0 & 3\\
            -2 & 2 & -1 & 0 & 0 & 0 & -1\\
            0 & -1 & 2 & -1 & 0 & -1 & 0\\
            0 & 0 & -1 & 2 & -1 & 0 & 0\\
            0 & 0 & 0 & -1 & 2 & 0 & 0\\
            0 & 0 & -1 & 0 & 0 & 2 & 0\\
            3 & -1 & 0 & 0 & 0 & 0 & 2\\
        \end{array}
    \right),\\
    J_{6,112} = 2\left(
        \begin{array}{ccccccc}
            2 & -1 & 1 & -1 & -1 & -1 & 0\\
            -1 & 2 & 0 & 0 & 0 & 0 & 1\\
            1 & 0 & 2 & -1 & -1 & -1 & 1\\
             -1 & 0 & -1 & 2 & 1 & 1 & -1\\
             -1 & 0 & -1 & 1 & 2 & 1 & 0\\
             -1 & 0 & -1 & 1 & 1 & 2 & 0\\
             0 & 1 & 1 & -1 & 0 & 0 & 2\\
        \end{array}
    \right),\\
    J_{6,126} = 2\left(
        \begin{array}{ccccccc}
            2 & -1 & -1 & -1 & -1 & 1 & -1\\
            -1 & 2 & 1 & 1 & 1 & -1 & 0\\
            -1 & 1 & 2 & 0 & 0 & -1 & 1\\
            -1 & 1 & 0 & 2 & 1 & 0 & 0\\
            -1 & 1 & 0 & 1 & 2 & -1 & 0\\
            1 & -1 & -1 & 0 & -1 & 2 & -1\\
            -1 & 0 & 1 & 0 & 0 & -1 & 2\\
        \end{array}
    \right).
\end{gather}
The number of faces of the cone is half the number of minimal vectors. For comparison, the $E_7$ lattice (as a 1-periodic set) has $126$ minimal vectors, and the enumeration of the extreme rays of its cone is already barely tractable by brute force. Since these forms are highly symmetric (have large automorphism groups), the number of symmetry orbits of the extreme rays is much smaller than the total number of extreme rays. By using a method that exploits the symmetry of these forms, the calculation would hopefully become tractable. This strategy was successful, for example, in making the enumeration of perfect lattice in $d=8$ tractable \cite{sikiric2007classification}.

\section{Conclusion}

Using our generalization of the Voronoi algorithm to 2-periodic sets, we were able to produce a complete enumeration of the locally optimal 2-periodic sphere packings in dimensions $d=3,4,$ and $5$. In particular, we show that it is impossible to obtain a higher density using 2-periodic arrangements in these dimensions than is possible with lattices. However, in $d=3$ and $d=5$ (but not in $d=4$), there are nonlattice 2-periodic arrangement that match the optimal lattice packing density.

Our work leaves a number of important open questions, whose solution will enable application of this method to higher values of $m$ and $d$:
\begin{enumerate}
    \item We were not able to prove \textit{a priori} that the Ryshkov-like polyhedron must have a finite number of faces (in particular vertices) up to the action of $\Gamma$. For $m=2$ and $d=3,4,5$, this follows directly from the fact that our enumeration halts. However, it would be good to know that the enumeration is always guaranteed to halt for any $m$ and $d$.
    \item Theorem \ref{thm:deg} is proven only for $m=2$, but we conjecture that it holds also for $m>2$. If this conjecture holds, our algorithm can be immediately extended to $m>2$. However, it would now involve looking for points of $\mathcal {R}_0(4)$ that lie on $\tfrac12m(m+1)$-dimensional faces of $\mcR(4)$. This would significantly increase the complexity of two steps the algorithm. First, instead of enumerating extreme rays of $C_J$ for vertices $J$, we need to enumerate the $\tfrac12m(m+1)$-dimensional faces of $C_J$. Second, instead of solving for $S-RQ^{-1}R^T=0$ over a line, we need to solve it over a $\tfrac12m(m+1)$-dimensional space.
    \item To make the enumeration for $m=2$ and $d>5$ tractable, it would be helpful to make use of the symmetries of highly symmetric vertices. For any vertex $J$, we need only enumerate the orbits of the extreme rays of $C_J$ under the automorphism group of $J$, $\mathrm{Aut}(J)\subset \Gamma$. Such methods were used to make the enumeration of perfect lattices tractable in $d=8$ \cite{sikiric2007classification}.
    \item In dimensions where a full enumeration might not be tractable, heuristic optimization methods could be useful for discovering new packing arrangements as well as providing empirical backing to conjectures about certain arrangements being optimal. Again, in the case of lattices ($m=1$), such methods have been remarkably successful, reproducing the densest known lattices in up to $d=20$ dimensions. Stochastic enumeration, traversing the 1-skeleton of the Ryshkov polyhedron by picking a random contiguous vertex at each step~\cite{andreanov2012random,andreanov2016extreme}, can be applied to the Ryshkov-like polyhedron treated here. Sequential linear programming methods~\cite{marcotte2013efficient,kallus2013jammed} and simulated annealing methods~\cite{kallus2013statistical} can also be used to sample periodic arrangements. Our results and those of Sch\"urmann~\cite{schurmann2010perfect,schurmann2013strict} can be used to certify locally optimal packings found.
    \item The theory presented here for packings of equal-sized spheres can be straightforwardly generalized to mixtures of differently-sized spheres in numerical proportions that are whole multiples of $1/m$. The constraints $J(\mathbf{n},\mathbf{l})\le \lambda$ need to be replaced by ones where the right-hand-side is a function of the species of the two spheres involved, $\lambda(\mathbf{l})$. This generalization can handle the case of additive or non-additive spheres.
\end{enumerate}

	\textit{Acknowledgments} A.\ A.\ was supported by Project Code (IBS-R024-D1). Y.\ K.\ was supported by an Omidyar Fellowship at the Santa Fe Institute.

\appendix

\section{Locally finite polyhedra}\label{sec:lfp}

\begin{dfn}\label{def:lfp}
    If the intersection of every compact polytope with the closed convex set $P\subset\mathbb{R}^n$ is a compact
    polytope, then $P$ is a \textit{locally finite polyhedron}.
    If, additionally, no line is a subset of $P$, then $P$ is \textit{locally finite polyhedron with no lines}.
    The \textit{dimension} of $P$ is the dimension of its affine span.
\end{dfn}

The empty set is considered a locally finite polyhedron of dimension $-1$. A locally finite polyhedron of dimension $0$ is a point. A locally finite polyhedron of dimension $1$ is a line segment, a ray, or a line.



\begin{dfn}\label{def:face}
    Let $P$ be a locally finite polyhedron and let $f: \mathrm{aff} P \to \mathbb{R}$ be a
    linear function such that $f(x) \le 0$ for every $x\in P$.
    Then $F=\{x\in P: f(x) = 0\}$ is a \textit{face} of $P$, and we say that $f$ \textit{supports} $F$.
\end{dfn}

Since the intersection of a compact polytope with an affine subspace is a compact polytope, the faces of a locally finite polyhedron are also locally finite polyhedra. The $k$-dimensional faces of an $n$-dimensional locally finite polyhedron are called vertices, and edges for $k=0$ and $1$ respectively.

\begin{prp}\label{prop:face-aff}
    If $F$ is a face of $P$, then $F = \mathrm{aff} F \cap P$.
\end{prp}
\begin{proof}
    Because $F$ is the intersection of an affine subspace of $\mathrm{aff} P$ with $P$,
    and any affine subspace containing $F$ contains $\mathrm{aff} F$,
    we have $\mathrm{aff} F \cap P \subseteq F$.
    From $F\subseteq P$, we have $F\subseteq \mathrm{aff} F \cap P$.
\end{proof}

\begin{prp}\label{prop:local-face}
    Let $P\subseteq\mathbb{R}^n$ be a locally finite polyhedron
    and let $K\subset\mathbb{R}^n$ be an $n$-dimensional compact polytope.
    If $F$ is a $d$-dimensional face of $P$ supported by $f$ and the interior of $K$ intersects $F$,
    then $F\cap K$ is a $d$-dimensional face of $P\cap K$ supported by $f$.
    Conversely, if $G$ is a $d$-dimensional face of $P\cap K$ supported by $g$ and the interior of $K$ intersects $G$,
    then $\mathrm{aff} G\cap P$ is a $d$-dimensional face of $P$ supported by $g$,
\end{prp}

\begin{proof}
    We have that $F\cap K = \{x\in P: f(x) = 0\}\cap K = \{x\in P\cap K: f(x) = 0\}$,
    and that $P\cap K\subseteq P \subseteq \{x:f(x)\le 0\}$.
    Since the affine span of a convex set is the same as the affine span of any
    neighborhood of a point in such a set, we also get that $F\cap K$ is $d$-dimensional.

    To prove the second part of the proposition, let $x$ be some point in the intersection of $G$
    with the interior of $K$.
    First, if there is some $y\in P$ with $g(y)>0$, then $x' = (1-\lambda)x+\lambda y \in P$ has $g(x')>0$.
    Since $x'\in K$ for $\lambda$ small enough, this is a contradiction, so $g(x)\le 0$ for all $x\in P$.

    Similarly, if there is some $y\in P\setminus \mathrm{aff} G$ with $g(y)=0$,
    then $x' = (1-\lambda)x+\lambda y$ has $g(x') = 0$.
    Since $x'\in K$ for $\lambda$ small enough, this is also a contradiction,
    so $\{y\in P: g(y) = 0\} \subset \mathrm{aff} G$. 
    Since $g(y) = 0$ for all $y\in\mathrm{aff} G$, we now have that $\mathrm{aff} G\cap P$ is a face of $P$
    supported by $g$.
    Since $G\subseteq\mathrm{aff} G\cap P\subseteq\mathrm{aff} G$, its dimension is the same as that of $G$.
\end{proof}

\begin{prp}\label{prop:face-of-face}
    A face of a face of $P$ is a face of $P$.
    Conversely, a face of $P$ contained in a face $F$ of $P$ is a face of $F$.
\end{prp}

\begin{proof}
    Let $F$ be a face of $P$ and let $G$ be a face of $F$.
    Let $T=[-t, t]^n$ be a box large enough so its interior intersects $G$.
    Then, by Proposition \ref{prop:local-face}, $F\cap T$ is a face of $P\cap T$ and $G\cap T$ is a face of $F\cap T$.
    Since each of these is a compact polytope, $G\cap T$ is a face of $P\cap T$.
    By Proposition \ref{prop:local-face}, $\mathrm{aff} G\cap P$ is a face of $P$.
    Finally
    $\mathrm{aff} G\cap P 
    = \mathrm{aff} G \cap \mathrm{aff} F \cap P
    = \mathrm{aff} G \cap F = G$, where the two last equalities follow from Proposition \ref{prop:face-aff}.

    For the converse, let $G$ be a face of $P$ that is supported by $g$ and contained in a face $F$.
    Then it immediately follows from the definition of a face,
    that the restriction of $g$ to the affine span of $F$ supports $G$ as a face of $F$.
\end{proof}

Note that the first clause of the proposition does not hold in general for convex sets, even bounded ones. Consider, for example, a stadium shape (the Minkowski sum of a disk and a line segment): the line segments on the boundary are 1-dimensional faces, but their endpoints are not 0-dimensional faces of the set.

The set of faces of $P$ partially ordered by inclusion is called the \textit{face complex} of $P$. The union of the faces of $P$ of dimension up to $k$ is called the $k$-skeleton of $P$.

\begin{prp}\label{prop:v-cone}
    Let $v$ be a vertex of a $d$-dimensional locally finite polyhedron $P$, then the intersection
    of all the closed halfspaces that contain $P$ and have $v$ on their boundary 
    is a $d$-dimensional polyhedral cone with no lines.
\end{prp}

\begin{proof}
    Let $T = v + [-1,1]^n$. Then $v$ is a vertex of $P\cap T$.
    The closed halfspaces that contain $P$ and have $v$ on their boundary are exactly
    the closed halfspaces that contain $P\cap T$ and have $v$ on their boundary.
    Since $P\cap T$ is a $d$-dimensional compact polytope, the proposition follows.
\end{proof}

We call this the vertex cone at $v$. The vertex cone can also be constructed as the conic hull of the edges that are incident on $v$.

\begin{prp}\label{prop:1skel-connected}
    The 1-skeleton of a locally finite polyhedron with no lines is connected.
\end{prp}

\begin{proof}
    Let $S_1$ and $S_2$ be distinct connected components of the 1-skeleton of $P$.
    Since $P$ has no lines, $S_1$ has a vertex of $P$.
    Therefore, let $v\in S_1$ a vertex of $P$ supported by $f$.
    Let $m = \sup_{S_2} f < 0$.
    Either $f(x) = m$ for some $x\in S_2$, or $f(x_n)\to m$ for a sequence
    of points $x_n\in S_2$ with distance from $v$ diverging to infinity.

    In the former case, there is a vertex $v'\in S_2$ with $f(v') = m$.
    Since its vertex cone as a vertex of $P$ is the conic hull of the edges incident on it,
    then $f(x) \le m$ for all $x$ in the vertex cone.
    Since $P$ is contained in the vertex cone, this is a contradiction, because $f(v)=0>m$.

    In the latter case, let $y_n$ be the intersection of the segments connecting $v$ to $x_n$ and
    the sphere of radius 1 around $v$. Since $y_n\in P$ and $f(y_n)\to 0$, the sequence accumulates
    at a point $y_\infty\in P$ with $f(y_\infty)=0$ and $\|y_\infty - v\| = 1$.
    This is also a contradiction, because $\{x\in P: f(x) = 0\} = \{v\}$.
\end{proof}

\bibliography{packing}

\end{document}